\theoremstyle{plain}   
\newtheorem{theorem}{Theorem}[section]
\newtheorem{lemma}[theorem]{Lemma}
\newtheorem{condition}[theorem]{Condition}
\newtheorem{definition}[theorem]{Definition}
\newtheorem{remark}[theorem]{Remark}
\newtheorem{assumption}[theorem]{Assumption}
\newtheorem{example}[theorem]{Example}
\newcommand{\R}{\mathbb{R}}
\newcommand{\di}{\mathrm{d}}
\newcommand{\E}{\mathbb{E}}
\newcommand{\Pb}{\mathbb{P}}
\begin{document}

\title{Wong--Zakai approximation of regime-switching SDEs via rough path theory}
\author{Jasper Barr\textsuperscript{1}\footnote{Corresponding author: jasper.barr@anu.edu.au}, Giang T. Nguyen\textsuperscript{2}, Oscar Peralta\textsuperscript{3}\\[\bigskipamount] \small \textsuperscript{1}Mathematical Sciences Institute, Australian National University \\ \small\textsuperscript{2}School of Mathematical Sciences, University of Adelaide \\\small\textsuperscript{3}School of Operations Research and Information Engineering, Cornell University}

\date{}



\maketitle
\begin{abstract}
This paper investigates the convergence of Wong--Zakai approximations to regime-switching stochastic differential equations, generated by a collection of finite-variation approximations to Brownian motion. We extend the results of \cite{nguyen2021rate} to $\R^d$-valued RSSDE by utilising rough path theoretic tools, acquiring the same modification of rate. \\

\noindent \textbf{AMS 2020 Mathematics Subject Classification:} 60H10, 60L90, 60F15.

\noindent\textbf{Keywords:} Wong–Zakai approximation, regime-switching stochastic differential equations, rough
path theory, strong convergence.
\end{abstract}





\section{Introduction}
    The ability to incorporate uncertainty has become a standard aspect of modern applied models. Indeed, the specific class of models known as stochastic differential equations (SDE) has found a wide variety of applications in modelling continuous phenomena across fields such as quantitative finance, ecology, mathematical biology, dynamical systems, and so on. However, SDEs suffer in that they only account for \textit{continuous} sources of uncertainty, typically driven by an $\mathbb{R}^d$-valued Brownian motion $B$. This deficiency motivates the notion of \textit{regime-switching}, in which the evolution of the stochastic process $Y$ depends on an additional jump process $J$ with countable state space $\mathcal{M}$:
    \begin{equation*}
        \di Y_t = \mu_{J_t}(t, Y_t) \, \di t + \sigma_{J_t}(t, Y_t) \, \di B_t\,,
    \end{equation*}
    where $\{\mu_i \}_{i \in \mathcal{M}}$ and $\{\sigma_i \}_{i \in \mathcal{M}}$ are a collection of drift and diffusion vector fields indexed by the (countable) state space $\mathcal{M}$ of the jump process $J$. Regime-switching SDEs (RSSDEs) allow one to account for discrete changes in environment that influence the continuous dynamics of $Y$. Think of a pressure sensor which is either operational or error-prone, or the trajectory of a stock price in a ``good'' or ``bad'' market.

    Regime-switching models have found a number of applications in quantitative finance \cite{duan2002option-pricing, zou2013}, in modelling rough volatility \cite{yin2010hybrid_switching}, and more recently in mean-field games \cite{bensoussan2020}.

    Naturally, the introduction of the jump process complicates the process of approximating such processes. Recently the first paper dealing with so-called Wong--Zakai approximations of 1-dimensional RSSDEs emerged \cite{nguyen2021rate}. The fundamental question addressed there is whether the strong convergence of finite-variation approximations $\{B^\lambda\}_{\lambda \geq 0}$ to a Brownian motion passes to convergence of the approximating RSSDEs $\{Y^\lambda\}_{\lambda \geq 0}$ to $Y$, and how the rate of convergence is affected. Recall that a collection of stochastic processes $\{B^\lambda\}_{\lambda \geq 0}$ is said to converge strongly to $B$ with rate function $\delta: \R^+ \rightarrow \R^+$ if for all $r$
    \begin{equation}\label{eq:sc-sup}
        \Pb \big( \Vert B - B^\lambda\Vert_{\infty; [0,T]} \geq k\delta(\lambda)\big) = o(\lambda^{-r})\,,
    \end{equation}
    where $k=k(r, T)$ is a constant dependent only on $r$ and $T$, and $\Vert \cdot\Vert_{\infty;[0,T]}$ denotes the supremum norm over the interval $[0,T]$. To address this question, Nguyen and Peralta \cite{nguyen2021rate} utilised a local pathwise estimate of the form 
    \begin{equation}\label{eq:lip-estimate-sup}
        \Vert Y - Y^\lambda\Vert_{\infty; [0,T]} \leq L \Vert B - B^\lambda \Vert_{\infty; [0,T]}\,,
    \end{equation}
    where $L$ is a random variable with desirable probabilistic properties. The estimate \eqref{eq:lip-estimate-sup} allows one to locally control the error of the regime-switching SDE approximation by the error in the driving processes, while the behaviour of $L$ is exploited to extend the strong convergence of $B^\lambda \rightarrow B$ to $Y^\lambda \rightarrow Y$ with a slightly worse rate, in the sense that
    \begin{equation*}
        \Pb \left( \Vert Y-Y^\lambda\Vert_{\infty;[0,T]} \geq \beta \delta(\lambda)\lambda^\varepsilon \right) = o(\lambda^{-r})
    \end{equation*}
    for all $\varepsilon >0$ and $r>0$.

    Unfortunately, \eqref{eq:lip-estimate-sup} fails to hold in the multi-dimensional case, due to the fact that the topology induced by the supremum norm is ill-suited to providing such estimates in dimension $d>1$. In the higher-dimensional setting, one can introduce oscillations at small scales to the approximations $\{B^\lambda\}$ such that the limit $\Vert B-B^\lambda \Vert_{\infty;[0,T]} \rightarrow 0$ remains unaffected, but it no longer holds that $\Vert Y - Y^\lambda \Vert_{\infty; [0,T]} \rightarrow 0$. The approximations $\{Y^\lambda \}_{\lambda \geq 0}$ can in fact converge to a \textit{different} limiting object given by $Y$, plus some correction term that vanishes in the scalar case or when vector fields built from $\{\sigma_i\}_{i \in \mathcal{M}}$ commute (see \cite[Theorem 7.2]{ikeda1992sde-diffusion} for further discussion). 
    
    To resolve this problem we utilise \textit{rough path theory}, introduced by Lyons \cite{lyons1998rough_paths} in the mid 90s. Rough path theory avoids the aforementioned problem by working with a stronger topology, induced by the so-called \textit{inhomogeneous rough path metric} $\rho_{p-var}$. This metric is built from a collection of path-norms, one of which is the $p$-variation seminorm of a path $X:[0,T] \rightarrow \mathbb{R}^d$
    \begin{equation}\label{eq:p-var}
    \Vert X \Vert_{p-var;[0,T]}^p := \sup_{\mathcal{P} \subset [0,T]} \sum_{t_i \in \mathcal{P}} |X_{t_i+1} - X_{t_i}|^p \,, \qquad p\in [1,\infty),
    \end{equation}
    where $|\cdot|$ denotes the Euclidean norm and the supremum is taken over all partitions $\mathcal{P}$ of $[0,T]$. Convergence of $B^{\lambda}$ to $B$ in $p$-variation norm not only implies convergence in supremum norm, but it also tracks small-scale oscillations. The payoff is that the local Lipschitz continuity of the solution map is restored, albeit in the different function space with a stronger topology that requires ``higher order'' information than just the path $B$. We discuss this in further detail in Section \ref{sec:preliminaries}.

    To run a similar argument as in \cite{nguyen2021rate}, we utilise a recent refinement \cite{2013friz:improved-lipschitz-estimate} of the Lipschitz estimate that applies to a class of Gaussian processes (and approximations thereof), of which Brownian motion is included. This leads us to the main result:

    \begin{theorem}
        Under the conditions of Theorem \ref{thm:sup-estimate} and Assumption \ref{asm:jump-tails}, suppose that there exists $\delta: \mathbb{R}_+ \rightarrow \mathbb{R}_+$ with $\lim_{\lambda \rightarrow \infty} \delta(\lambda) = 0$ such that for all $r>0$ 
\begin{equation*}
    \mathbb{P}\big( \rho_{p-var;[0,T]}(\mathbf{B}^\lambda, \mathbf{B}) \geq k\delta(\lambda)\big) = o(\lambda^{-r})\,,
\end{equation*}
where $k=k(r,T)>0$ is a constant dependent on $r$ and $T$ only. Then there exists some constant $\beta =\beta(r,T)>0$ such that for all $\varepsilon >0$ 
\begin{equation}
    \mathbb{P}\left(\big\Vert Y- Y^\lambda\big\Vert_{\infty; [0,T]} \geq \beta \delta(\lambda) \lambda^\varepsilon \right) = o(\lambda^{-r}).
\end{equation}
    \end{theorem}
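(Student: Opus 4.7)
The plan is to mirror the one-dimensional argument of \cite{nguyen2021rate} but with the inhomogeneous rough path metric $\rho_{p-var;[0,T]}$ replacing the supremum distance on the driver, using Theorem \ref{thm:sup-estimate} in place of \eqref{eq:lip-estimate-sup}. I expect Theorem \ref{thm:sup-estimate} to yield a pathwise estimate of the form
\begin{equation*}
    \Vert Y - Y^\lambda\Vert_{\infty;[0,T]} \leq L \cdot \rho_{p-var;[0,T]}(\mathbf{B}^\lambda, \mathbf{B}),
\end{equation*}
where $L$ is a random variable depending polynomially on $\Vert\mathbf{B}\Vert_{p-var;[0,T]}$, $\Vert\mathbf{B}^\lambda\Vert_{p-var;[0,T]}$, and the number of jumps $N_T$ of $J$ on $[0,T]$. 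With this in hand, I would fix $r, \varepsilon > 0$, choose $\beta = k(r,T)$ from the hypothesis, and split via
\begin{equation*}
    \bigl\{\Vert Y - Y^\lambda\Vert_{\infty;[0,T]} \geq \beta \delta(\lambda) \lambda^\varepsilon\bigr\} \subseteq \bigl\{L \geq \lambda^\varepsilon\bigr\} \cup \bigl\{\rho_{p-var;[0,T]}(\mathbf{B}^\lambda, \mathbf{B}) \geq \beta \delta(\lambda)\bigr\}.
\end{equation*}
The second event has probability $o(\lambda^{-r})$ by assumption, so the whole problem reduces to tail control of $L$.

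For the first event, since $L$ depends polynomially on the listed random variables, it suffices to establish super-polynomial tails for each of them. For $\Vert\mathbf{B}\Vert_{p-var;[0,T]}$ this follows from the Fernique-type estimate for the enhanced Brownian rough path; for $N_T$ it is precisely Assumption \ref{asm:jump-tails}; and for $\Vert\mathbf{B}^\lambda\Vert_{p-var;[0,T]}$ it can be bootstrapped from the Fernique estimate together with the hypothesis on $\rho_{p-var;[0,T]}(\mathbf{B}^\lambda, \mathbf{B})$ via the triangle inequality for the inhomogeneous metric. A union bound across these three factors then yields $\mathbb{P}(L \geq \lambda^\varepsilon) = o(\lambda^{-r})$ for every $r > 0$.

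The main obstacle will be the uniform-in-$\lambda$ control of the tail of $L$. The Friz-type Lipschitz estimate is inherently polynomial in the $p$-variation norms of the drivers, and these polynomial factors compound across the random partition of $[0,T]$ determined by the jumps of $J$; I will need to verify that this compounding still leaves super-polynomial tails for $L$ once $N_T$ is controlled by Assumption \ref{asm:jump-tails}. The freedom to choose $\varepsilon > 0$ arbitrarily provides exactly the slack needed to absorb the resulting polynomial growth, so the rate degradation $\delta(\lambda) \mapsto \delta(\lambda)\lambda^\varepsilon$ is both unavoidable in this scheme and sufficient for the claim.
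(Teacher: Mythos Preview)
Your overall skeleton---split off the event $\{L \geq \lambda^\varepsilon\}$ and control its tail uniformly in $\lambda$---is exactly what the paper does. The gap is in your description of $L$. The Lipschitz constant furnished by Theorem \ref{thm:sup-estimate} is \emph{not} polynomial in $\Vert\mathbf{B}\Vert_{p-var}$, $\Vert\mathbf{B}^\lambda\Vert_{p-var}$, and $N^J$; it has the form
\[
L = K_1\, C^{N^J}\, \exp\!\bigl\{ C\bigl( N_{\alpha,[0,T]}(\mathbf{B}) + N_{\alpha,[0,T]}(\mathbf{B}^\lambda)\bigr)\bigr\},
\]
i.e.\ \emph{exponential} in the greedy-partition counts and in $N^J$. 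This is not a technicality: the raw RDE Lipschitz estimate grows like $\exp(C\Vert\mathbf{B}\Vert_{p-var}^p)$, and since $\Vert\mathbf{B}\Vert_{p-var}$ has only Gaussian tails (Fernique), the exponential of its $p$th power for $p>2$ has no useful moments. The replacement of $\Vert\mathbf{B}\Vert_{p-var}^p$ by $N_{\alpha}(\mathbf{B})$ is precisely the Cass--Litterer--Lyons / Friz--Riedel improvement, and it is what makes the argument work: $N_\alpha(\mathbf{B})$ itself has Gaussian tails, so $\exp(C N_\alpha(\mathbf{B}))$ is tractable.

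Consequently, the tail control you need is not Fernique for $\Vert\mathbf{B}\Vert_{p-var}$ but the Weibull/Gaussian tail of $N_{\alpha,[0,T]}(\mathbf{B})$ from \cite{2013cass:gaussian-tail-rde,friz2012integrability}, together with a uniform-in-$\lambda$ tail for $N_{\alpha,[0,T]}(\mathbf{B}^\lambda)$, which the paper obtains in Lemma \ref{lem:tail-greedy-approx} via the set inclusion of Lemma \ref{lem:tail-inclusion} and Lemma \ref{lem:greedy-num-remainder}. The paper then takes $F_2, F_3$ to be sublevel sets of $N_\alpha$ at height $\sqrt{(r+\varepsilon)\log\lambda/c_1}$, so that on $F_2\cap F_3$ the exponential factor is bounded by $\exp\{\tilde C\sqrt{\log\lambda}\}$, which is absorbed by $\lambda^{-\varepsilon/2}$. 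The exponential $C^{N^J}$ is handled separately on $F_1 = \{N^J \leq A_\lambda\}$ with $A_\lambda = f_{\gamma_0,r}^{-1}(\lambda)$, exactly as in \cite{nguyen2021rate}; Assumption \ref{asm:jump-tails} gives $\Pb(N^J>A_\lambda)=o(\lambda^{-r})$, and the Lambert-$W$ asymptotics ensure $C^{A_\lambda}\lambda^{-\varepsilon/2}\to 0$. Your polynomial-in-$N^J$ claim would not survive this step either, since the bound is $C^{N^J}$, not $(N^J)^C$.
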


    Assumption \ref{asm:jump-tails} matches Assumption 4 of \cite{nguyen2021rate}, which ensures that the jumps of $J$ are well-behaved by imposing bounds on the tail probability of the number of jumps over the compact interval $[0,T]$. As noted in \cite{nguyen2021rate}, this assumption is not overly restrictive in that it allows for deterministic, time-homogeneous Markov, time-inhomogeneous Markov, and semi-Markov processes. The conditions of Theorem \ref{thm:sup-estimate} are those required to guarantee the existence of a unique solution to the corresponding \textit{rough} regime-switching equation associated to the regime-switching SDE of interest. Notably, this requires much higher regularity than the standard assumption of local Lipschitz continuity and linear growth from SDE theory.

   The structure of the paper is as follows. We briefly recall in Section \ref{sec:preliminaries} some main results and definitions from rough path theory. In Section \ref{sec:greedy-partition} we formally define the partitioning scheme introduced in \cite{2013cass:gaussian-tail-rde}, and describe how the functional of interest behaves under path concatenation and strong convergence. In Section \ref{sec:lipschitz} we extend the improved Lipschitz estimate for RDEs driven by Gaussian noise \cite{2013friz:improved-lipschitz-estimate} to the regime-switching case. In Section \ref{sec:strong-convergence}, we establish the strong convergence of approximations to regime-switching RDEs by utilising tail behaviour of the greedy partition from Section \ref{sec:greedy-partition} and the pathwise estimates from Section \ref{sec:lipschitz}. We conclude with a brief application to the standard scheme of approximation via linear interpolation to a Brownian rough path.
 
\section{Preliminaries}\label{sec:preliminaries}

In this section we provide some background information on rough path theory, regime-switching SDEs and strong convergence. 
\subsection{Rough path theory}
Rough path theory, first introduced in \cite{lyons1998rough_paths}, provides a pathwise solution theory for stochastic differential equations. To make rigorous the notion of `roughness', we use the $p$-variation seminorm of a path $X:[0,T] \rightarrow \R^d$ as defined in \eqref{eq:p-var}. If one is interested in solving equations of the form
\begin{equation}\label{eq:rde}
    \di Y_t = Y_t \, \di X_t, \qquad Y_0 = y,
\end{equation}
for paths $X:[0,T] \rightarrow \R^d$ and $Y:[0,T] \rightarrow \mathcal{L}(\R^d, \R^e)$ (the space of linear maps from $\R^d$ into $\R^e$), $p$-variation turns out to be a natural scale with which to measure  the roughness of paths.  As Young \cite{young1936holder} discovered, the integral $\int_0^t Y_s \,\di X_s$ is defined classically via Riemann--Stieltjes integration if $Y$ and $X$ are of bounded $p$ and $q$-variation, with $\frac{1}{p} + \frac{1}{q} > 1$.  This condition is sharp in the sense that examples of paths exist satisfying $p^{-1} + q^{-1} =1$ such that the Riemann--Stieltjes approximations of $\int_0^t Y_s \, \di X_s$ fail to converge.  In the simplified setting where $Y_t = f(X_t)$ for some smooth $f:\R^d \rightarrow \mathcal{L}(\R^d, \R^e)$, this implies that $X$ must be a path with finite $p$-variation for $p < 2$. Remarkably, this is the \textit{exact} threshold which the sample path regularity of Brownian motion fails to meet \cite{friz2010introduction}. Thus, we call a path \textit{rough} if it is of finite $p$-variation only for some $p>2$. 

To deal with this problem when the driving path $X$ is a Brownian motion, stochastic calculus exploits the desirable probabilistic properties of Brownian motion by considering convergence of the approximates $\sum_{[u,v] \in \mathcal{P}} f(B_u) (B_v- B_u)$ in \textit{probability}, rather than almost surely, as the mesh size of the partition $\mathcal{P}$ tends to zero. Rough path theory, on the other hand, takes a different approach.

The perspective of rough path theory is that the limit 
\begin{equation*}
    \lim\limits_{|\mathcal{P}|\rightarrow 0} \sum_{[u,v] \in \mathcal{P}} f(X_u) (X_v - X_u)
\end{equation*}
fails to converge because the zeroth order approximation $f(X_r) \approx f(X_u)$ for $r \in [u,v]$ is not good enough to account for the roughness of the driving path $X$. If instead we take the first order approximation $f(X_r) \approx f(X_u) + Df(X_u)(X_r - X_u)$, the approximation of our integral becomes 
\begin{align}
   \int_0^t f(X_s) \, \di X_s &= \lim\limits_{|\mathcal{P}|\rightarrow 0} \sum_{[u,v] \in \mathcal{P}} \int_u^v \left( f(X_u) + Df(X_u) \big(X_r - X_u\big) \right) \di X_r \nonumber \\
    &= \lim\limits_{|\mathcal{P}|\rightarrow 0} \sum_{[u,v] \in \mathcal{P}} f(X_u) \big( X_v - X_u \big) + Df(X_u) \int_u^v \big(X_r - X_u\big) \otimes \di X_r, \label{eq:int-compensated}
\end{align}
where $\otimes$ denotes the tensor product. When $X$ has finite $p$-variation with $p<2$, so that $\int X \otimes \di X$ is classically defined, then the above approximation converges to the usual Riemann--Stieltjes integral. In the rough setting with $p \in [2,3)$, the iterated integral $\int X \otimes \di X$ is no longer uniquely defined as a function of the path $X$, and so we instead \textit{postulate} values for the iterated integral by specifying a two-parameter path $\mathbb{X}:[0,T]^2 \rightarrow \R^d \otimes \R^d$. Any choice of $\mathbb{X}$ satisfying an algebraic condition known as \textit{Chen's relation} and having finite $p/2$-variation is a suitable candidate, with different choices yielding different limits for \eqref{eq:int-compensated}. Equation \eqref{eq:int-compensated} is then referred to as the rough path integral with respect to the \textit{$p$-rough path} $\mathbf{X} = \big(X, \mathbb{X}\big)$.  This approach works not only for functions of the path $X$, but also for a class of paths known as \emph{controlled rough paths}.
\begin{definition}
        Let $X:[0,T] \rightarrow \R^d$ be a path of finite $p$-variation. We say that $Y:[0,T]\rightarrow \R^e$ is \emph{controlled by $X$} if there exists a path $Y' : [0,T] \rightarrow \mathcal{L}(\R^d, \R^e)$ such that 
        \begin{equation*}
            Y_{s,t} = Y_s'X_{s,t} + R_{s,t}^Y,
        \end{equation*}
        where $Y$ and $Y'$ have finite $p$-variation, and the remainder term $R^Y$ has finite $p/2$-variation. We denote the space of all paths controlled by $X$ by $\mathscr{D}^{p/2}_X$.
\end{definition}
The space $\mathscr{D}^{p/2}_X$ is a Banach space under the norm 
\begin{equation*}
    (Y, Y') \mapsto |Y_0| + |Y_0'| + \Vert Y, Y'\Vert_{X}^{p/2}
\end{equation*}
with 
\begin{equation*}
    \Vert Y, Y'\Vert_{X}^{p/2} := \Vert Y\Vert_{p-var;[0,T]} + \Vert R^Y\Vert_{p/2-var;[0,T]}.
\end{equation*}
This opens the door to fixed-point techniques, allowing us to establish the existence and uniqueness of solutions to \emph{rough} differential equations by finding fixed-points in $\mathscr{D}^{p/2}_X$.

Although this procedure of making a particular choice of $\mathbb{X}$ may seem arbitrary, there are some natural choices when $X=B$ is a Brownian motion. In this setting, we may simply enhance $B$ with its stochastic iterated integral $\mathbb{B}$, in either the It\^o or Stratonovich sense. Here, we work with the Stratonovich iterated integrals $\int B\otimes\,    \di B$, and refer to the object $\mathbf{B} := \big( B, \mathbb{B}\big)$ as the \textit{Brownian rough path}, or \textit{enhanced Brownian motion}.

Given the importance of the $p$-variation of a path highlighted above, we introduce the \emph{inhomogeneous $p$-rough path metric} 
\begin{equation*}
    \rho_{p-var}(\mathbf{X}, \mathbf{Y}) := \max \left\{ \Vert X - Y \Vert_{p-var;[0,T]}, \Vert\mathbb{X} - \mathbb{Y} \Vert_{p/2-var;[0,T]}\right\},
\end{equation*}
with 
\begin{equation*}
    \Vert\mathbf{X}\Vert_{p/2-var;[0,T]} = \left( \sup_{\mathcal{P}} \sum_{[u,v] \in \mathcal{P}} \vert \mathbb{X}_{u,v} \vert^{p/2} \right)^{2/p}.
\end{equation*}
Finally, we introduce the \emph{$\gamma$-Lipschitz} norm of a map between two normed spaces. In the following, let $\lfloor \gamma \rfloor$ be the largest integer which is \emph{strictly} smaller than $\gamma$, so that we may always write $\gamma = \lfloor \gamma \rfloor + \{\gamma\}$ with $\{\gamma\} \in (0, 1]$.
\begin{definition}[$\gamma$-Lipschitz norm, \cite{friz2010introduction}]
    A map $V:E\rightarrow F$ between two normed spaces $E$ and $F$ is called $\gamma$-Lipschitz (in the sense of E. Stein), in symbols 
        \begin{equation*}
            V \in \mathrm{Lip}^\gamma(E,F)\text{ or simply }V \in \mathrm{Lip}^\gamma \text{ if }E = F,
        \end{equation*}
    if $V$ is is $\lfloor \gamma \rfloor$ times continuously differentiable and such that there exists a constant $0 \leq M < \infty$ such that the supremum norm of its $k$th-derivatives $k=0,..., \lfloor \gamma \rfloor$, and the H\"older norm of its $\lfloor \gamma \rfloor$th derivative are bounded by $M$. The smallest $M$ satisfying the above conditions is the $\gamma$-Lipschitz norm of $V$ and denoted $|V|_{\mathrm{Lip}^\gamma}$.
\end{definition}

The payoffs to this approach are numerous. Whilst the solution map associated with the SDE $\di Y_t = f(B_t)\,  \di B_t$, $Y_0 = y_0$ is in general only measurable, the solution map associated to a \textit{rough} differential equation $\di Y_t = V(Y_t) \, \di \mathbf{X}_t$, $Y_0 = y_0$ is locally Lipschitz continuous (under some standard conditions) with respect to the initial condition, driving path, and the vector field $V$. This leads to error estimates of the form
\begin{equation}\label{eq:rde-sol}
    \Vert Y - \widetilde{Y}\Vert_{\infty} \leq C \big( |y_0 - \widetilde{y}_0| + |V - \widetilde{V}|_{Lip^\gamma} + \rho_{p-var}(\mathbf{X}, \widetilde{\mathbf{X}}) \big).
\end{equation}

In particular, this estimate may be applied when we consider approximations of  rough differential equations (RDEs) of the form \eqref{eq:rde}.  Given a rough path $\mathbf{X} = (X, \mathbb{X})$ and a collection of finite-variation approximations $\{X^\lambda\}_{\lambda \geq 0}$ of $X$, we can \textit{lift} these approximations to rough paths by enhancing them with the Riemann--Stieltjes integrals $\mathbb{X}^\lambda_{s,t} := \int_s^t X_{s,r}^\lambda \otimes \di X^\lambda_r$. Establishing convergence of $\mathbf{X}^\lambda := (X^\lambda, \mathbb{X}^\lambda)$ to $\mathbf{X}$ in rough path metric then implies convergence of $Y^\lambda \rightarrow Y$ via Equation \eqref{eq:rde-sol}, where $Y^\lambda$ is the solution of $\di Y_t = f(Y_t) \di \mathbf{X}_t$ and $Y^\lambda_t$ is the solution of $\di Y_t^\lambda = f(Y^\lambda_t) \di \mathbf{X}^\lambda_t$. 
\begin{remark}
     The supremum norm in Equation \eqref{eq:rde-sol} can generally be replaced by the $p$-variation rough path metric. However, we return to the weaker supremum topology as it is a standard way to measure approximation error.
\end{remark}
\subsection{Gaussian rough paths}

Unlike the semimartingale setting, there is no notion of stochastic integration we could use to lift a general $\R^d$-valued Gaussian process $X$ to a rough path $\mathbf{X}$. Recalling that the law of a (centred) Gaussian process $X_t = (X^1_t, ..., X^d_t)$ is completely determined by its covariance function $R(s,t) = \E[X_s \otimes X_t]$, it is perhaps not surprising that the question of finding a natural rough path lift boils down to the regularity of the \textit{rectangular increments} of $\R^d$.
\begin{definition}
    Let $X$ be a centred $\R^d$-valued Gaussian process with covariance function $R:[0,T]^2 \rightarrow \R^{d \times d}$. The rectangular increments of $R$ are given by
    \begin{equation*}
        R\begin{pmatrix}
            s&,& t \\
            s'&,& t'
        \end{pmatrix}:= \E[ X_{s,t} \otimes X_{s', t'}].
    \end{equation*}
    Given a set $I\times I' \subset [0,T]^2$, the 2D $\varrho$-variation of $R$ is given by 
    \begin{equation*}
        \Vert R\Vert_{\varrho, I \times I'} = \left( \sup\limits_{\substack{\mathcal{P}\subset I, \\ \mathcal{P}' \subset I'}}  \sum_{\substack{[s,t] \in \mathcal{P}, \\ [s', t'] \in \mathcal{P'}}} \left|  R\begin{pmatrix}
            s&,& t \\
            s'&,& t'
        \end{pmatrix} \right|^{\varrho}\right)^{1/\varrho}
    \end{equation*}
\end{definition}
Under the condition that $X$ is a Gaussian process with covariance $R$ of finite 2D $\varrho$-variation for $\varrho <2$, we may define the integral of $X^i$ against $X^j$ as the $L^2$-limit 
\begin{equation}
    \int_s^t X^i_{s,r} \,\di X^j_r = \lim\limits_{|\mathcal{P}|\rightarrow 0} \sum_{[u,v]\in \mathcal{P}} X^i_{s,v} X^j_{u,v}.
\end{equation}
Further to this, we have an estimate of the form
\begin{equation*}
    \E \left[ \left( \int_s^t X^i_{s,r} \, \di X^j_r \right)^2\right] \leq C\Vert R\Vert_{\varrho; [s,t]^2},
\end{equation*}
where $C = C(\varrho)$ [Proposition 10.3, \cite{friz2014rough-path-book}]. Defining 
\begin{equation*}
    \mathbb{X}^{i,j}_{s,t} = \int_s^t X^i_{s,r}\ \di X_r^j, \quad \mathbb{X}^{i,i}_{s,t} = \frac{1}{2}\left(X^i_{s,t}\right)^2,\quad \text{and} \quad \mathbb{X}_{s,t}^{i,j} = - \mathbb{X}_{s,t}^{i,j} +X^i_{s,t} X^j_{s,t},
\end{equation*}
it follows that if there exists $M$ and $\varrho \in \big[1, \frac{3}{2}\big)$ such that for every $i \in \{1,..., d\}$ and $0\leq s \leq t \leq T$
\begin{equation*}
    \Vert R_{X^i}\Vert_{\varrho; [s,t]^2} \leq M|t-s|^{1/\varrho},
\end{equation*}
then the process $(X, \mathbb{X})$ is almost surely a $p$-rough path for $p> 2\varrho $ [Theorem 10.4, \cite{friz2014rough-path-book}]. This result can be pushed further to $\varrho \in \big[1, 2\big)$, but requires a \textit{third} level of the rough path to be defined (see Chapter 15 of \cite{friz2010introduction}).
\begin{example}[Brownian motion]
Given a Brownian motion $B = (B^1, ..., B^d)$, the covariance function has the form $R(s,t) = (s\wedge t)I_d$. Since $B^i$ and $B^j$ are centred and independent for $i \neq j$, $\E[X^i_{s,t} X^j_{s,t}]=0$. The on-diagonal terms then take the form
\begin{align*}
    \E[X^{i}_{s,t} X^i_{s', t'}] &= \E[ X^i_{t'} X^i_t] - \E[X_{s'}^iX^i_t] + \E[X_s^i X_{s'}^i] - \E[X_s^i X_{t'}^i]\\
    &= \min\{t, t'\} - \min\{s', t\} + \min\{s, s'\} - \min\{s, t'\}\\
    &=|[s', t'] \cap [s, t]|,
\end{align*}
showing that $B$ has finite 2D $\varrho$-variation for $\varrho=1$. Lifting $B$ to a rough path $\mathbf{B}$ using the Gaussian framework will yield a process $\mathbf{B}$ which is indistinguishable from the Stratonovich lift $(B, \mathbb{B}^{\text{Strat}})$ \cite{friz2014rough-path-book}.
\end{example}
\begin{example}[Fractional Brownian motion]
    Recalling fractional Brownian motion $B^H$ for $H\in \big(0, \frac{1}{2}\big]$ is the $\R^d$-valued process with covariance function 
    \begin{equation*}
        R^H(s,t) = \frac{1}{2} \left(t^{2H} + s^{2H} - |t-s|^{2H} \right),
    \end{equation*}
\cite{friz2010introduction} show that $B^H$ has finite 2D $\varrho$-variation for $\varrho = \frac{1}{2H}$.
\end{example}

\subsection{Regime-switching SDE via rough paths}

The consistency between stochastic and rough integration, when both are defined, is a well-known feature of rough path theory \cite{friz2010introduction}. Indeed, the standard regularity requirements that are imposed to guarantee the existence of a unique solution to a given RDE are enough to ensure the existence of a unique strong solution to the corresponding SDE. 
\begin{definition}[Regime-switching SDE]\label{def:rssde}
    Let $J$ be a jump process with finite activity on compact intervals and countable state space $\mathcal{E}$. Let $\mu: \R_+\times \R^m \times \mathcal{E} \rightarrow \R^m$ and $\sigma : \R_+ \times \R^m \times \mathcal{E} \rightarrow \mathcal{L}(\R^d, \R^m)$, and write $\mu(t,x,i) =\mu_i(t,x)$, $\sigma(t,x,i) = \sigma_i(t,x)$. Finally, let $B = (B^1,..., B^d)$ be an $\R^d$-valued Brownian motion independent of $J$ and $Y_0 \in \R^m$. Then the equation
    \begin{equation}\label{eq:rssde-def}
        Y_t = Y_0 + \int_0^t \mu_{J_s}(s,Y_s) \, \di s + \int_0^t \sigma_{J_s}(s,Y_s)\, \di B_s
    \end{equation}
    is referred to as a \textit{regime-switching} stochastic differential equation.
\end{definition}
\begin{remark}
    One can express the solution as a coupled process $\big\{(Y_t, J_t)\big\}_{t\geq 0}$ where $J$ is given by the stochastic integral with respect to a Poisson random measure $\mathfrak{p}$ \cite{yin2010hybrid_switching}. This can be useful in refining some arguments when working with jump processes $J$ with probabilistic behaviour depending on the trajectory of $Y$, but for our purposes \eqref{eq:rssde-def} will be more useful. 
\end{remark}
\begin{remark}\label{rem:rssde-concat}
    Under the conditions on $J$ in Definition \ref{def:rssde}, with suitable conditions on $\mu$ and $\sigma$, there exists a unique strong solution for the SDEs 
    \begin{equation*}
        Y_t^i = Y_0^i + \int_0^t \mu_i(s,Y_s^i)\, \di s + \int_0^t \sigma_{i} (s, Y_s^i) \, \di B_s\,.
    \end{equation*}
    Considering the stopping times $\tau_0 = 0$ and $\tau_{k+1} = \inf\{t> \tau_k : J_t \neq J_{\tau_k} \}$, we may write for $t \in [ \tau_k, \tau_{k+1}]$
    \begin{align*}
        Y_t &= Y_0 + \int_0^t \mu_{J_s}(s, Y_s)\, \di s + \int_0^t \sigma_{J_s}(s, Y_s)\, \di B_s \\
        &=  Y_0 + \int_0^{\tau_k} \mu_{J_s}(s, Y_s)\, \di s +\int_{\tau_k}^t \mu_{J_s}(s, Y_s)\, \di s + \int_0^{\tau_k} \sigma_{J_s}(s, Y_s)\, \di B_s  + \int_{\tau_k}^t \sigma_{J_s}(s, Y_s)\, \di B_s \\
        &= Y_{\tau_k} + \int_{\tau_k}^t \mu_{J_{\tau_k}}(s, Y_s)\, \di s+ \int_{\tau_k}^t \sigma_{J_{\tau_k}}(s, Y_s)\, \di B_s\,.
    \end{align*}
    The almost sure finite jump activity of $J$ then implies that $\lim_{k\rightarrow \infty} \tau_k = \infty$, so that the above holds for all $t\in [0,T]$, $T>0$. Thus, we can recover solutions of \eqref{eq:rssde-def} by concatenating solutions of SDEs within known regimes. 
\end{remark}
We now introduce the rough equivalent of regime-switching SDEs: 
\begin{definition}\label{def:rp-rssde}
    Let $J$ be a jump process with a.s. finite activity on compact intervals and countable state space $\mathcal{E}$. Let $p, \gamma$ be such that $2<p<\gamma$. Assume that 
    \begin{enumerate}
        \item $V^j = (V_{i}^j)_{1\leq i \leq d}$ is a collection of $\mathrm{Lip}^\gamma(\R^e)$ vector fields for $ j \in \mathcal{E}$;
        \item $\sup_{j \in \mathcal{E}} |V^j|_{\mathrm{Lip}^\gamma} < \infty$; and
        \item $\mathbf{X}$ is a Gaussian rough path.
    \end{enumerate}
    Let $(\tau_k)_{k=0}^{N^J+1}$ be the jump times of $J$ on $[0,T]$ (with $N^J$ denoting the number of jumps of $J$) and let $\{Y^k\}_{k =0}^{N^J}$ be the RDE solutions to 
    \begin{align*}
        \di Y_t^0 &= y_0 + \int_0^t V_{J_0}(Y_s^0) \, \di \mathbf{X}_s,  & t &\in [0, \tau_1], \quad y_0 \in \R^e, \\
        \di Y_t^k &= Y_{\tau_k}^{k-1} + \int_{\tau_k}^t V_{J_{\tau_k}}(Y_s^k) \, \di \mathbf{X}_s\,, & t &\in [\tau_k, \tau_{k+1}]\,, \quad 1\leq k \leq N.
    \end{align*}
Then the path $\{Y_t\}_{t\geq 0}$ constructed by concatenating the individual paths $\{Y^k\}_{k=0}^{N^J}$ is said to solve the regime-switching rough differential equation driven by $\mathbf{X}$ and $J$. 
\end{definition}

It remains to prove consistency between stochastic and rough regime-switching differential equations. Given that solutions to these equations are determined via fixed-points of the rough and stochastic integral respectively, we prove consistency for general regime-switching rough integration and regime-switching stochastic integration.

\begin{theorem}
    Let $\mathbf{B}=\big(B, \mathbb{B}\big)$ denote the It\^o Brownian rough path, $J$ be a Markov chain with finite state space $\mathcal{E}$ with $\E[N^J]<\infty$, and suppose $\big( X^i, (X^i)'\big) \in \mathscr{D}_{B(\omega)}^{p/2}$ almost surely for each $i \in \mathcal{E}$. Then the regime-switching rough integral 
    \begin{equation}\label{eq:rough-integral-approx}
        \int_0^t X^{J_s} \, \di \mathbf{B}_s  := \lim_{n\rightarrow \infty} \sum_{[u,v] \in \mathcal{P}_n}\left( X_u^{J_u} B_{u,v} + \big(X^{J_u}_{u}\big)' \mathbb{B}_{u,v} \right),
    \end{equation}
    exists, with the limit taken along any sequence $\{\mathcal{P}_n\}_{n=1}^\infty$ with mesh size tending to 0. In the case that $X^i$ and $(X^i)'$ are adapted for every $i \in \mathcal{E}$, then 
    \begin{equation*}
        \int_0^t X^{J_s}\, \di \mathbf{B}_s = \int_0^t X^{J_s} \, \di B_s,
    \end{equation*}
    almost surely.
\end{theorem}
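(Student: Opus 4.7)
The plan is to exploit the piecewise regime structure of $J$ to reduce the statement to the classical (non-switching) case. Since $\E[N^J] < \infty$, the jump count $N^J$ is a.s. finite, and conditioning on its value we may list the jump times $0 = \tau_0 < \tau_1 < \cdots < \tau_{N^J} < \tau_{N^J+1} = T$ with $J \equiv j_k$ constant on each $[\tau_k, \tau_{k+1})$. Because $\mathcal{E}$ is finite and each $(X^i,(X^i)')$ lies in $\mathscr{D}^{p/2}_{B(\omega)}$ a.s., the whole finite family is simultaneously controlled by $B$ on $[0,T]$ outside a null set, so we can work pathwise thereafter.

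The first step is to prove existence of the limit in \eqref{eq:rough-integral-approx}. Given any partition $\mathcal{P}_n$ of $[0,T]$ with $|\mathcal{P}_n| \to 0$, I would pass to the refinement $\widetilde{\mathcal{P}}_n := \mathcal{P}_n \cup \{\tau_1,\dots,\tau_{N^J}\}$. On $\widetilde{\mathcal{P}}_n$ the Riemann-type sum decomposes as
\begin{equation*}
\sum_{k=0}^{N^J}\sum_{[u,v]\in \widetilde{\mathcal{P}}_n \cap [\tau_k,\tau_{k+1}]} \Bigl(X^{j_k}_u B_{u,v} + (X^{j_k})'_u\,\mathbb{B}_{u,v}\Bigr),
\end{equation*}
and on each regime block this is the standard compensated Riemann sum for the controlled rough integral $\int_{\tau_k}^{\tau_{k+1}} X^{j_k}\,\di\mathbf{B}$, which converges by Gubinelli's theorem (the sewing lemma). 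Summing the $N^J+1$ block limits yields a candidate limit. Next I would compare the sum over $\mathcal{P}_n$ with the sum over $\widetilde{\mathcal{P}}_n$: they differ only on the at most $N^J$ intervals $[u,v] \in \mathcal{P}_n$ that straddle a jump time. On such an interval the discrepancy is bounded by
\begin{equation*}
\bigl(|X^{J_u}_u| + |X^{J_v}_v|\bigr)|B_{u,v}| + \bigl(|(X^{J_u})'_u| + |(X^{J_v})'_v|\bigr)|\mathbb{B}_{u,v}|,
\end{equation*}
and since the finite family $\{X^i,(X^i)'\}_{i\in\mathcal{E}}$ is uniformly bounded on $[0,T]$ pathwise, while $|B_{u,v}| + |\mathbb{B}_{u,v}|^{1/2}$ tends to $0$ uniformly as $|\mathcal{P}_n|\to 0$ by continuity of $\mathbf{B}$, the total discrepancy vanishes.

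For the second assertion, assume additionally that each $X^i,(X^i)'$ is adapted. On a single regime interval $[\tau_k,\tau_{k+1}]$ with $J\equiv j_k$, the classical consistency theorem for It\^o Brownian rough paths (e.g.\ \cite{friz2014rough-path-book}) gives
\begin{equation*}
\int_{\tau_k}^{\tau_{k+1}} X^{j_k}\,\di\mathbf{B}_s \;=\; \int_{\tau_k}^{\tau_{k+1}} X^{j_k}_s\,\di B_s \qquad \text{a.s.,}
\end{equation*}
where the right-hand side is the It\^o integral; the point is that $\mathbb{B}$ is the It\^o iterated integral, so the Gubinelli correction term is a martingale with vanishing quadratic variation against the Riemann sums of the It\^o integral. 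Concatenating over $k=0,\dots,N^J$ and invoking additivity of both the regime-switching rough integral (just established) and the It\^o integral (standard) gives the desired equality on $[0,T]$.

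The main obstacle is the treatment of partition intervals that cross jump times: one must justify that inserting the jump times into the partition changes the sum by an asymptotically negligible amount, which is where the finiteness of $N^J$ and the uniform control of the controlled-rough-path family over the finite state space $\mathcal{E}$ are essential. Everything else is a bookkeeping exercise combining the classical controlled-rough-path theory with the concatenation argument already used in Remark \ref{rem:rssde-concat}.
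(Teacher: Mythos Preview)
Your argument for the existence of the limit is essentially the paper's: both of you refine $\mathcal{P}_n$ by inserting the jump times, observe that on the refined partition the sum decomposes into standard controlled rough-path Riemann sums on each regime block, and then bound the discrepancy coming from the finitely many straddling intervals by terms that vanish with the mesh. The paper carries this out a touch more carefully, using Chen's relation to split $\mathbb{B}_{u,v}$ across the inserted point and obtaining three explicit error terms of order $O(|\mathcal{P}_n|^\alpha)$; your displayed bound on the discrepancy is cruder (and not literally the correct expression for the difference), but the conclusion is the same and the idea is identical.

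For the consistency with the It\^o integral your route differs from the paper's. You invoke the classical rough-vs-It\^o consistency theorem on each regime interval $[\tau_k,\tau_{k+1}]$ and concatenate. This is cleaner, but note that the endpoints and the label $j_k$ are random, so you should spell out why the classical statement transfers: for each fixed $i\in\mathcal{E}$ one has $\int_0^{\cdot} X^i\,\di\mathbf{B}=\int_0^{\cdot} X^i\,\di B$ as continuous processes a.s., hence simultaneously for all (finitely many) $i$ and all subintervals by continuity, and one may then evaluate at the random $\tau_k,\tau_{k+1},j_k$. The paper instead runs a direct $L^2$ computation: it writes the squared error between the compensated Riemann sum and the It\^o integral, conditions on $\mathcal{F}^J$, applies the It\^o isometry, and bounds the integrand intervalwise using the controlled-path remainder on constant-regime intervals together with a separate estimate on the (at most $N^J$) intervals containing a jump; this is where the hypothesis $\E[N^J]<\infty$ is used explicitly. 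Your approach bypasses that computation entirely and does not need $\E[N^J]<\infty$ for this half of the theorem (only $N^J<\infty$ a.s.), which is a mild gain; the paper's approach, on the other hand, is self-contained and does not rely on knowing the classical consistency result over random subintervals.
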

\begin{proof}
    In the following we assume that 
    \begin{equation*}
        \E \left[ \sup_{i \in \mathcal{E}} \Vert X^i, \big(X^i\big)' \Vert_{B}^{p/2} \right] < \infty,    
    \end{equation*}
    noting that if this is not the case we may proceed by localisation. Having defined the regime-switching rough integral for a fixed trajectory of $J(\omega)$ and $\mathbf{B}(\omega)$ via concatenation of the rough integral over periods when $J$ is in a fixed regime, we must show that the approximation in Equation \eqref{eq:rough-integral-approx} converges to the same path. To this end, let $I^c_t$ denote the integral achieved by concatenation and $I_t$ by the proposed limit \eqref{eq:rough-integral-approx}. Let $(\tau_k)_{k=0}^{N^J+1}$ denote the jump times of $J$ on the interval $[0,T]$. Noting that $I_t = I_t^c$ for  $t \in [0, \tau_1]$, we consider $t \in [\tau_1, \tau_2)$. We must deal with the fact that for any sequence of partitions $\{\mathcal{P}_n\}_{n=1}^\infty$, there will generally be consecutive points $u^n, v^n \in \mathcal{P}_n$ with $u^n < \tau_1 < v^n$ such that we will be approximating the regime-switching rough integral by the wrong control. That is, if $J_{u^n} = i$ and $J_{v^n} = j$, we will be approximating $I_t$ over the interval $[u^n,v^n)$ as if the jump process was in state $i$ over the whole interval. Adding and subtracting the `correct' control we see that 
    \begin{align*}
        \bigg\vert \sum_{[u,v] \in \mathcal{P}_n} &\left( X_{u}^{J_u} B_{u,v} + \big(X^{J_{u}}_{u}\big)' \mathbb{B}_{u,v}\right)  \pm \left( X_{\tau_1}^{J_{\tau_1}} B_{\tau_1, v^n} + \big( X_{\tau_1}^{J_{\tau_1}}\big)' \mathbb{B}_{\tau_1, v^n} \right) \bigg\vert \\
        &\leq \bigg\vert \sum_{[u,v] \in \mathcal{P}_n'} X^{J_u}_u B_{u,v}  + \big(X^{J_u}_{u}\big)' \mathbb{B}_{u,v} \bigg\vert  + \left\vert  \big(X_{u^n}^i - X_{v^n}^j \big) B_{\tau_1, v^n} \right\vert \\
        & \qquad \qquad + \left\vert \left( \big( X^i_{u^n} \big)' - \big(X^j_{\tau_1}\big)'\right)\mathbb{B}_{\tau_1, v^n} \right\vert + \left\vert  \big( X^i_{u^n}\big)' B_{u^n, \tau_1} \otimes B_{\tau_1, v^n} \right\vert,
    \end{align*}
   where $\mathcal{P}_n' = \mathcal{P}_n \cup \{\tau_1\}$ and we have used the fact that $B$ is additive and $\mathbb{B}$ satisfies Chen's relation to split the terms. Since $\tau_1 \in \mathcal{P}_n'$ for every $n$, the first term converges to $I^c_t$. The other three terms do not affect the limit, which we see from the bounds 
   \begin{align*}
    \left\vert  \big(X_{u^n}^i - X_{v^n}^j \big) B_{\tau_1, v^n} \right\vert &\leq C \Vert B\Vert_{\alpha} \vert v^n - \tau_1 \vert^\alpha = O\big(\vert \mathcal{P}_n'\vert^\alpha \big) \\
    \left\vert \left( \big( X^i_{u^n} \big)' - \big(X^j_{\tau_1}\big)'\right)\mathbb{B}_{\tau_1, v^n} \right\vert &\leq C \Vert \mathbb{B}\Vert_{2\alpha} |v^n - \tau_1|^{2\alpha} = O\big(\vert \mathcal{P}_n'\vert^\alpha \big) \\
    \left\vert  \big( X^i_{u^n}\big)' B_{u^n, \tau_1} \otimes B_{\tau_1, v^n} \right\vert &\leq C \Vert B \Vert_{\alpha}^2 \vert v^n - \tau_1\vert^\alpha \vert \tau_1 - u^n \vert^\alpha = O\big(\vert \mathcal{P}_n'\vert^\alpha \big),
   \end{align*}
   for 
   \begin{equation*}
    C:= 2\max\left\{ \sup_{i \in \mathcal{E}} \Vert X^i\Vert_{\infty},\, \sup_{i \in \mathcal{E}} \Vert (X^i)'\Vert_\infty \right\} < \infty.
\end{equation*}
We repeat this process for $[\tau_2, \tau_3)$, $[\tau_3, \tau_4)$ and so on, noting that since $N^J< \infty$ almost surely we can cover $[0,T]$ in a finite number of steps, showing that $I_t = I_t^c$ almost surely for any $t \in [0,T]$. 

Next, we show that the rough path and stochastic integrals coincide almost surely. To do so, we use the standard technique of showing that our rough path approximation converges in $L^2$ to the It\^o integral $\int_0^t X^{J_s} \di B_s$. Then, since the approximation converges to the rough path integral almost surely and the stochastic integral in $L^2$, these limits must coincide. We consider the specific partition $\mathcal{P}_n = \{tk/n\}_{k=0}^n$ of $[0,t]$. Consider the $L^2$ error 
    \begin{align*}
        \E[I_t^n] &=\E \bigg[ \bigg\vert \int_0^t X^{J_s} \di B_s - \sum_{k=0}^{n-1} X_{t_k^n}^{J_{t_k}} B_{t_k^n, t_{k+1}^n} + ( X_{t_k^n}^{J_{t_k}})' \mathbb{B}_{t_k^n, t_{k+1}^n} \bigg\vert^2\bigg] \\
            &= \E \left[ \left\vert \sum_{k=0}^{n-1} \int_{t_k^n}^{t_{k+1}^n} \left( X^{J_s}_s - X_{t_k^n}^{J_{t_k}} - (X_{t_k^n}^{J_{t_k}})' B_{t_k^n, s} \right) \di B_s \right\vert^2 \right]\\
            &= \E \left[ \E \left[ \left\vert \sum_{k=0}^{n-1} \int_{t_k}^{t_{k+1}} \left( X^{J_s}_s - X_{t_k^n}^{J_{t_k}} - (X_{t_k^n}^{J_{t_k}})' B_{t_k^n, s} \right) \di B_s \right\vert^2 \big\vert \mathcal{F}_t^J \right] \right],
    \end{align*}
where in the final step we condition on the $\sigma$-algebra $\mathcal{F}^J_t := \sigma(J_s, 0 \leq s \leq t)$ generated by $\{J_t\}_{t \in [0,T]}$. Noting that the product of the stochastic integral over disjoint intervals has expectation zero and applying the (multivariate) It\^o isometry to the remaining terms, we get
\begin{align*}
    \E[I_t^n] &= \E \left[ \E\left[ \sum_{k=0}^{n-1} \int_{t_k^n}^{t_{k+1}^n}  \left| X_s^{J_s} - X_{t_k^n}^{J_{t_k^n}} - (X_{t_k^n}^{J_{t_k^n}})' B_{t_k^n, s}  \right|^2 \di s\big\vert \mathcal{F}_t^J\right] \right].
\end{align*}
Now, if $J_s = i$ for all $s \in [t_k^n, t_{k+1}^n]$, then the integrand is controlled by $B$ for the fixed regime over the whole interval, meaning that $X^i_{t_k^n, s} = (X^i_{t_k^n})' B_{t_k^n, s} + R_{t_k^n, s}^i$, where $R^i \in C^{2\alpha}$ is the remainder term. In such a case, we can bound the integral by writing 
\begin{align*}
    \int_{t_k^n}^{t_{k+1}^n}  \left| X_s^{i} - X_{t_k^n}^{i} - (X_{t_k^n}^i)' B_{t_k^n, s}  \right|^2 \di s &= \int_{t_k^n}^{t_{k+1}^n} | R_{t_k^n, s}^i |^2 \di s \\
    &\leq \Vert R^i \Vert_{2\alpha} \int_{t_k^n}^{t_{k+1}^n} |s- t_{k^n}|^{4\alpha} \di s \\
    &\leq \sup_{i \in \mathcal{E}} \Vert R^i\Vert_{2\alpha} \cdot \frac{1}{4\alpha + 1} |t^n_{k+1} - t_k^n|^{1 + 4\alpha}\\
    &\leq \sup_{i \in \mathcal{E}} \Vert R^i\Vert_{2\alpha} \cdot \frac{1}{4\alpha + 1} \left(\frac{t}{n} \right)^{1 + 4\alpha}
\end{align*}
In the case that a jump occurs at time $\tau$ to state $j$ in the interval $[t_k^n, t_{k+1}^n]$, we decompose the integral via 
\begin{align*}
    \int_{t_k^n}^{t_{k+1}^n}  \left| X_s^{J_s} - X_{t_k^n}^{i} - (X_{t_k^n}^i)' B_{t_k^n, s}  \right|^2 \di s &=  \int_{t_k^n}^{\tau}  \left| X_s^{i} - X_{t_k^n}^{i} - (X_{t_k^n}^i)' B_{t_k^n, s}  \right|^2 \di s  \\ 
    & \qquad +  \int_{\tau}^{t_{k+1}^n}  \left| X_s^{j} - X_{t_k^n}^{i} - (X_{t_k^n}^i)' B_{t_k^n, s}  \right|^2 \di s.
\end{align*}
The first term can be bounded as before, while for the second we observe that 
\begin{align*}
    \int_{\tau}^{t_{k+1}^n}  \left\vert X_s^{j} - X_{t_k^n}^{i} - (X_{t_k^n}^i)' B_{t_k^n, s}  \right\vert^2 \di s &= \int_{\tau}^{t_{k+1}^n}  \left| (X_s^j - X_s^i) + R_{t_k^n, s}^i \right|^2 \di s \\
    &\leq 2 \sup_{i \in \mathcal{E}} \Vert X^i\Vert_\infty^2 |t_{k+1}^n - \tau| \\ 
    & \qquad \quad + \sup_{i \in \mathcal{E}} \Vert R^i\Vert_{p/2} \frac{|t_{k+1}^n - \tau|^{4\alpha + 1}}{4\alpha + 1}\\
    &\leq C\left( \frac{t}{n} +\frac{1}{4\alpha+1} \left(\frac{t}{n}\right)^{4\alpha + 1} \right).
\end{align*}
Since this occurs in at most $N^J$ of the $n$ intervals, we can bound $\E[I_n]$ (absorbing constants dependent on $\alpha$ and $t$) by 
\begin{align*}
    \E[I_n] &\leq C \E \left[ \sup_{i\in\mathcal{E}} \Vert R^i \Vert_{2/p} \left( n^{-4\alpha} + \frac{N^J}{n} + \frac{N^J}{n^{4\alpha + 1}}    \right) \right] \\
    &\leq C \E\left[ \sup_{i \in \mathcal{E}}\Vert X^i, (X^i)'\Vert_{B}^{p/2} \right]\left( n^{-4\alpha} + \E[N^J]\big(n^{-1} + n^{-(1+4\alpha)}\big)  \right) \rightarrow 0,
\end{align*}
provided $\E[N^J] < \infty$ and $\E\left[ \sup_{i \in \mathcal{E}} \Vert X^i, (X^i)'\Vert_{B}^{p/2}\right] < \infty$.
\end{proof}

\subsection{Strong convergence}
Let $(M, d)$ be a metric space, and suppose that $\{X^\lambda\}_{\lambda \geq 0}$ is a family of $M$-valued random variables with time horizon $[0,T]$. We say that $X^\lambda$ converges strongly to an $M$-valued random variable $X$ if 
\begin{equation*}
     \Pb \left( \lim\limits_{\lambda \rightarrow \infty} d(X^\lambda, X) = 0 \right) = 1\,. 
\end{equation*}
Further, we say that $X^\lambda$ converges \textit{with rate} $\delta(\lambda)$ if there exists a function $\delta: \mathbb{R}^+\rightarrow \mathbb{R}^+$ and constant $\alpha(q,T)$ such that 
\begin{equation*}
    \Pb \left( d(X^\lambda, X) \geq \alpha \delta(\lambda) \right) = o(\lambda^{-q})\,,
\end{equation*}
for all $q >0$. Of particular interest is the case when we take $M$ to be the path space of a given stochastic process. For example, we may take the usual choice $M = C\big( [0,T], \mathbb{R} \big)$ with $d(X,Y) = \sup_{t \in [0,T]} |X_t - Y_t |$ to investigate the strong convergence of stochastic processes with continuous sample paths, as in \cite{romisch1985,nguyen2021rate}. In the sequel, we consider strong convergence in the rough path space $C^{p-\text{var}}\big([0,T], \mathbb{R}^d\big)$ equipped with the $p$-variation rough path metric. 

There are two approaches we will take to prove strong convergence. The first is a rudimentary application of the Markov inequality, while the second utilises Lipschitz estimates on bounded sets in $M$. The second method is well suited to extend strong convergence of processes that drive regime-switching SDEs to the solutions of said RSSDEs. 

\section{The greedy partition}\label{sec:greedy-partition}
We introduce the greedy partition first introduced in \cite{2013cass:gaussian-tail-rde} by Cass, Litterer and Lyons. In the following, let $\Delta_{[s,t]}$ be shorthand for the 2-simplex on $[s,t]$:
\begin{equation*}
    \Delta_{[s,t]} := \big\{ (u, v): s\leq u \leq v \leq t \big\}.
\end{equation*}
 We first recall the definition of a \textit{control}.
\begin{definition}
    A function $\omega: \Delta_{[0,T]} \rightarrow [0, \infty)$ is called a \textit{control} if:
    \begin{enumerate}
        \item $\omega$ is superadditive, so that for all $s \leq u \leq t \in [0,T]$ 
        \begin{equation*}
            \omega(s,u) + \omega(u,t) \leq \omega(s,t),
        \end{equation*}
        \item $\omega(s,s) = 0$ for all $s \in [0,T]$, and 
        \item $\omega$ is continuous. 
    \end{enumerate}
\end{definition}

\begin{definition}\label{def:greedy-partition}
    Let $\omega: \Delta_{[s,t]} \rightarrow [0, \infty)$ be a control. For $\alpha >0$, set
    \begin{align*}
        \tau_0(\alpha) &= s\,, \\
        \tau_{i+1}(\alpha) &= \inf \{ u: \omega(\tau_i, u) \geq \alpha, \tau_i(\alpha) < u \leq t \} \wedge t \,, \qquad i \geq 0
    \end{align*}
    and define 
    \begin{equation*}
        N_{\alpha, [s,t]}(\omega) = \sup \{ n \in \mathbb{N} \cup \{0\} : \tau_n(\alpha) < t \} \,.
    \end{equation*}
    The sequence $(\tau_i(\alpha))_{i=0}^\infty$ is then called the \textit{greedy sequence}, with $N_{\alpha,[s,t]}(\omega)+1$ counting the number of distinct elements in $(\tau_i(\alpha))_{i=0}^\infty$.
\end{definition}
    Lemma 4.9 and Corollary 4.10 in \cite{2013cass:gaussian-tail-rde} establish that $N_{\alpha, [s,t]}(\omega)$ is well-defined for the particular choice $\omega (s,t) = \Vert \mathbf{X}\Vert_{p-var;[s,t]}^p$ whenever $\mathbf{X}$ is a $p$-rough path, and that the greedy sequence (with the trivial tail removed) forms a partition of $[s,t]$. As discussed in \cite{friz2012integrability}, the random functional $N_{\alpha, [s,t]}(\Vert \mathbf{X}\Vert_{p-var;[s,t]}^p)$ enjoys far better  probabilistic  tail estimates than $\Vert \mathbf{X}\Vert_{p-var;[s,t]}^p$. This, combined with the Lipschitz estimate of Theorem 4 in \cite{2013friz:improved-lipschitz-estimate}, is the primary tool we will utilise in the proof of our main theorem. 
    \begin{figure}
        \includegraphics[width=0.8\linewidth]{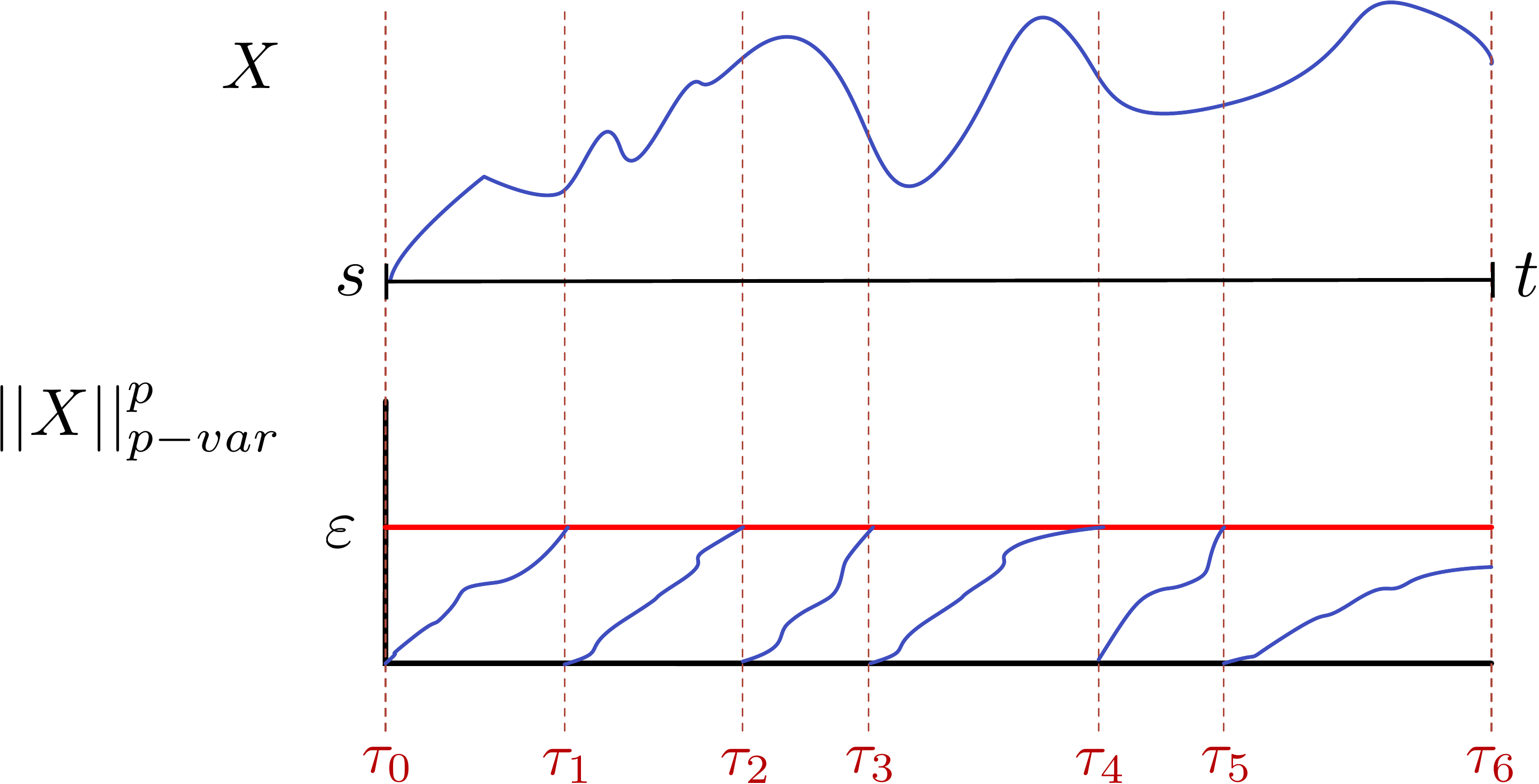}
        \caption{The greedy partition for a path $X$ with control $\Vert X\Vert_{p-var}^p$ and error $\alpha = \varepsilon$.}
    \end{figure}
    Specifically, we use the tail estimate from Corollary 2 of \cite{friz2012integrability},
    \begin{equation*}
        \Pb \left( N_{\alpha, [0,T]} \big( \Vert \mathbf{X}\Vert_{p-var;[0,T]}^p \big) > u \right) \leq \exp \left\{ - \frac{1}{2} \left( \hat{a} + \frac{\alpha^{1/p} u^{1/q}}{CK} \right)^2 \right\},
    \end{equation*}
    which applies to a class of Gaussian $p$-rough paths $\mathbf{X}$ including Brownian motion, and where $C>0, K>0, q>0, \hat{\alpha} > -\infty$ are constants dependent on the type of process $\mathbf{X}$ chosen.  Since for every polynomial $\sum_{i=1}^n a_i x^i$ there exists some $x_0, C_1, C_2>0$ such that 
    \begin{equation*}
        C_1 x^n \leq \sum_{i=1}^n a_i x^i \leq C_2 x^n,  \text{ for } x > x_0,
    \end{equation*}
    there exists some constant $c$ such that 
    \begin{equation*}
        \Pb \left( N_{\alpha, [0,T]} \big( \Vert \mathbf{X}\Vert_{p-var;[0,T]}^p\big) > u \right) \leq \exp \left\{ - c u^{2/q} \right\}
    \end{equation*}
    for all $u > u'$, where $u'$ depends on $c$ and $c = c(\hat{a}, C, K, p, q)$ depends on the previous constants. 
    \begin{remark}
        For the Brownian rough path case $\mathbf{X} = \mathbf{B}$, we may choose $q=1$, yielding Gaussian tails. 
    \end{remark}

    We now investigate the behaviour of $N_{\alpha, [\cdot, \cdot]}$ under concatenation.

\begin{lemma}\label{lem:subadditivity-greedy}
    Let $\{s=p_0< p_1< \cdots < p_{M+1} = t\}$ be a partition of $[s,t]$. Then 
    \begin{equation*}
        N_{\alpha,[s,t]}(\omega) \geq \sum_{k=0}^M N_{\alpha,[p_k, p_{k+1}]}(\omega)\,.
    \end{equation*}
\end{lemma}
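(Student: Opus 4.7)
The plan is to reduce by induction on $M$ to the two-piece case
$N_{\alpha,[s,t]}(\omega) \geq N_{\alpha,[s,u]}(\omega) + N_{\alpha,[u,t]}(\omega)$
for arbitrary $s < u < t$, and then to establish this two-piece statement by a second induction on $N := N_{\alpha,[s,t]}(\omega)$. The reduction over $M$ is straightforward: apply the two-piece case at $p_M$ to split off $[p_M, p_{M+1}]$, then invoke the inductive hypothesis on the partition of $[s, p_M]$ with $M-1$ intermediate points.

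For the two-piece case, the base $N = 0$ is immediate, since by definition $\omega(s, v) < \alpha$ for all $v < t$ and monotonicity of $\omega$ (a consequence of superadditivity, using $\omega(s,s)=0$) then forces both $N_{\alpha,[s,u]}$ and $N_{\alpha,[u,t]}$ to vanish. For the inductive step, write $\tau_1$ for the first greedy point of $[s,t]$, so that $\omega(s, \tau_1) = \alpha$ and $N_{\alpha,[\tau_1,t]} = N - 1$. If $u > \tau_1$, the greedy partition on $[s, u]$ takes its initial step to $\tau_1$ and thereafter coincides with the greedy partition on $[\tau_1, u]$; hence $N_{\alpha,[s,u]} = 1 + N_{\alpha,[\tau_1,u]}$. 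The inductive hypothesis applied to $[\tau_1, t]$ at the split point $u$ then gives $N_{\alpha,[\tau_1,u]} + N_{\alpha,[u,t]} \leq N - 1$, and adding $1$ yields the desired inequality.

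When $u \leq \tau_1$, the definition of $\tau_1$ immediately forces $N_{\alpha,[s,u]} = 0$, leaving the bound $N_{\alpha,[u,t]} \leq N$ to be shown. This is provided by an auxiliary monotonicity lemma: $N_{\alpha,[s',t]}(\omega) \leq N_{\alpha,[s,t]}(\omega)$ whenever $s \leq s' \leq t$. The lemma itself is proved by induction on $N$ along the same lines. If $s' > \tau_1$, the inductive hypothesis on $[\tau_1, t]$ (which has $N - 1$ greedy steps) applies directly. If $s' \leq \tau_1$, superadditivity yields $\omega(s', v) \leq \omega(s, v) < \alpha$ for $v < \tau_1$, so the first greedy point on $[s', t]$ lies at or beyond $\tau_1$, and the inductive hypothesis applied to $[\tau_1, t]$ bounds the remainder.

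The main obstacle is the case $u \leq \tau_1$. A natural strategy would be to argue that the greedy partition minimises the number of subintervals among all partitions of $[s,t]$ satisfying $\omega \leq \alpha$ on each piece; this is false in general when $\omega$ has flat regions, as simple examples show. The monotonicity lemma circumvents this by comparing first greedy steps directly via the superadditivity of $\omega$, rather than attempting a global interval-counting argument.
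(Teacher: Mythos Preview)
Your proof is correct. The double induction (first on $M$ to reduce to the two-piece case, then on $N_{\alpha,[s,t]}$ for that case) goes through cleanly, and the auxiliary left-endpoint monotonicity lemma is both correctly stated and correctly proved from superadditivity. The paper takes a different and more direct route: it buckets the greedy points $(\tau_j)$ of $[s,t]$ according to which cell $[p_{i-1},p_i]$ they fall into, uses that each greedy step contributes exactly $\alpha$ to obtain $\alpha N_{\alpha,[s,t]} = \sum_i \alpha(k_i - k_{i-1})$ with $k_i = \sup\{k:\tau_k \le p_i\}$, and then invokes the inclusion $[p_{i-1},p_i]\subseteq[\tau_{k_{i-1}},\tau_{k_i+1}]$ to bound each $N_{\alpha,[p_{i-1},p_i]}$ from above. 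That argument is shorter but tacitly relies on two-sided monotonicity of $N_{\alpha,[\,\cdot\,,\,\cdot\,]}$ in the interval (both in the ``without loss of generality'' reduction and in the final inequality), which it does not prove. Your approach is longer but fully self-contained: you isolate and establish precisely the monotonicity fact that the paper uses implicitly, and your inductive structure makes transparent where superadditivity and continuity of $\omega$ enter.
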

\begin{proof}
    Let $(\tau_i(\alpha))$ be the greedy sequence over $[s,t]$ for the control $\omega$. If there exist $i,k$ such that $\tau_i < p_k < p_{k+1} < \tau_{i+1}$, then $N_{\alpha, [p_k, p_{k+1}]}(\omega)=0$, which follows from 
    \begin{equation*}
        N_{\alpha, [p_k, p_{k+1}]}(\omega) \leq N_{\alpha, [\tau_i, \tau_{i+1}]}(\omega)= 0\,.    
    \end{equation*}
    Thus,  without loss of generality we assume that  there is at most one $p_k$ between any given pair $\tau_i < \tau_{i+1}$. Next, we set $k_i := \sup\{k: \tau_k \leq p_i \}$ for $i \geq 0$ and write 
    \begin{align*}
        \alpha N_{\alpha, [s,t]} &= \sum_{k=0}^{N_{\alpha,[s,t]}(\omega)-1} \omega(\tau_k, \tau_{k+1}) \\
            &= \sum_{i=1}^{M+1} \sum_{j = k_{i-1}}^{k_i-1} \omega(\tau_j, \tau_{j+1}) \\
            &= \alpha \sum_{i=1}^{M+1} N_{\alpha, [\tau_{k_{i-1}}, \tau_{k_{i}+1}]}(\omega) \\
            &\geq \alpha \sum_{i=1}^{M+1} N_{\alpha, [p_{i-1}, p_i]}(\omega)
    \end{align*}
    with the last inequality following from the fact that $[p_{i-1}, p_i] \subseteq [\tau_{k_{i-1}}, \tau_{k_i+1}]$.

\end{proof}

We now investigate the tail behaviour of an approximation $\{\mathbf{X}^\lambda\}_{\lambda>0}$ to $\mathbf{X}$, under the assumption of strong convergence. We begin with a simple set inclusion:

\begin{lemma}\label{lem:tail-inclusion}
    Let $\mathbf{X}$ and $\mathbf{X}^\lambda$ be $p$-rough paths, and take $\alpha>0$. Then
    \begin{equation*}
        \left\{ N_{\alpha,[s,t]} \big(\Vert \mathbf{X}^\lambda\Vert_{p-var}^p \big) \geq u \right\} \subset \left\{ N_{\frac{\alpha}{2^{p-1}},[s,t]}\left(\Vert \mathbf{X}\Vert_{p-var}^p + \Vert \mathbf{X} -\mathbf{X}^\lambda\Vert_{p-var}^p \right) \geq u \right\}.
    \end{equation*}
\end{lemma}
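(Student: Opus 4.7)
The plan is to reduce the set inclusion to a pointwise comparison between two controls, and then to a monotonicity property of the greedy counting functional.

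First, I would verify that both arguments of $N$ are genuine controls on $\Delta_{[s,t]}$. For $(u,v) \mapsto \Vert \mathbf{X}^\lambda\Vert_{p-var;[u,v]}^p$ this is standard. For $(u,v)\mapsto \Vert \mathbf{X}\Vert_{p-var;[u,v]}^p + \Vert \mathbf{X} -\mathbf{X}^\lambda\Vert_{p-var;[u,v]}^p$ the observation is simply that superadditivity, continuity, and vanishing on the diagonal are all preserved under addition.

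Next, I would establish the pointwise bound. By the triangle inequality for the (rough-path) $p$-variation seminorm applied on each subinterval, combined with the convexity estimate $(a+b)^p \leq 2^{p-1}(a^p + b^p)$ valid for $p \geq 1$, one obtains
\begin{equation*}
    \Vert \mathbf{X}^\lambda\Vert_{p-var;[u,v]}^p \leq 2^{p-1}\bigl(\Vert \mathbf{X}\Vert_{p-var;[u,v]}^p + \Vert \mathbf{X} - \mathbf{X}^\lambda\Vert_{p-var;[u,v]}^p\bigr)
\end{equation*}
for every $(u,v) \in \Delta_{[s,t]}$. Denoting the left-hand side by $\omega_1(u,v)$ and the bracketed right-hand side by $\omega_2(u,v)$, this reads $\omega_1 \leq 2^{p-1}\omega_2$.

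The central step is the following monotonicity property of the greedy counting functional: whenever $\omega_1, \omega_2$ are controls with $\omega_1 \leq C\omega_2$ and $\alpha >0$,
\begin{equation*}
    N_{\alpha,[s,t]}(\omega_1) \leq N_{\alpha/C, [s,t]}(\omega_2).
\end{equation*}
I would prove this by comparing the greedy sequences $(\tau_i)$ for $\omega_1$ at level $\alpha$ and $(\tau'_i)$ for $\omega_2$ at level $\alpha/C$. Inductively, $\tau_0 = \tau'_0 = s$; if $\tau'_i \leq \tau_i$, then using superadditivity and non-negativity of $\omega_2$ and the pointwise bound,
\begin{equation*}
    \omega_2(\tau'_i, \tau_{i+1}) \geq \omega_2(\tau_i, \tau_{i+1}) \geq \omega_1(\tau_i, \tau_{i+1})/C \geq \alpha/C,
\end{equation*}
which forces $\tau'_{i+1} \leq \tau_{i+1}$. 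Thus the greedy sequence for $\omega_2$ at level $\alpha/C$ reaches $t$ no slower than the one for $\omega_1$ at level $\alpha$, giving the inequality on the counts.

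Specialising to $C = 2^{p-1}$ and translating the resulting inequality between $N$-values into the corresponding event inclusion finishes the proof. I expect the main subtlety to be in the inductive step of the greedy-sequence comparison, specifically in correctly invoking superadditivity (rather than mere monotonicity in one variable) to guarantee $\omega_2(\tau'_i, \tau_{i+1}) \geq \omega_2(\tau_i, \tau_{i+1})$ when $\tau'_i \leq \tau_i$; the remaining manipulations are routine applications of the triangle inequality and the convexity bound.
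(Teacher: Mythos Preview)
Your proposal is correct and follows essentially the same approach as the paper: both establish the pointwise bound $\omega_1 \leq 2^{p-1}\omega_2$ via the triangle inequality plus convexity, and then deduce $N_{\alpha}(\omega_1) \leq N_{\alpha/2^{p-1}}(\omega_2)$ by comparing the two greedy sequences. Your inductive argument ($\tau'_i \leq \tau_i \Rightarrow \tau'_{i+1} \leq \tau_{i+1}$ via superadditivity) is in fact a bit cleaner than the paper's version, which instead tracks the indices $k_i = \sup\{\ell : \tau_\ell^2 \leq \tau_i^1\}$ to reach the same conclusion $\tau_i^2 \leq \tau_i^1$.
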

\begin{proof}
    By the triangle inequality and Jensen's inequality, we have 
    \begin{equation*}
        \Vert \mathbf{X}^\lambda\Vert_{p-var}^p \leq 2^{p-1} \left( \Vert \mathbf{X}\Vert_{p-var}^p + \Vert \mathbf{X} - \mathbf{X}^\lambda\Vert_{p-var}^p  \right)\,.
    \end{equation*}
    Then, if $\Vert \mathbf{X}^\lambda\Vert_{p-var}^p \geq \alpha$ it follows that 
    \begin{equation*}
        \Vert \mathbf{X}\Vert_{p-var}^p + \Vert \mathbf{X} - \mathbf{X}^\lambda\Vert_{p-var}^p \geq \frac{\Vert \mathbf{X}^\lambda\Vert_{p-var}^p}{2^{p-1}} \geq \frac{\alpha}{2^{p-1}}\,,
    \end{equation*}
    yielding
    \begin{equation*}
        \left\{ \Vert \mathbf{X}^\lambda\Vert_{p-var; [s,t]}^p \geq \alpha \right\} \subset \left\{ \Vert \mathbf{X}\Vert_{p-var; [s,t]}^p + \Vert \mathbf{X} - \mathbf{X}^\lambda\Vert_{p-var; [s,t]}^p \geq \frac{\alpha}{2^{p-1}} \right\}\,.
    \end{equation*}
    For simplicity of notation, let us write 
    \begin{align*}
        \omega^1 (s,t) &= \Vert \mathbf{X}^\lambda\Vert ^p_{p-var;[s,t]}, \\
        \omega^2 (s,t) &= \Vert \mathbf{X}\Vert_{p-var;[s,t]}^p + \Vert \mathbf{X} - \mathbf{X}^\lambda \Vert_{p-var;[s,t]}^p \,.
    \end{align*}
    Consider the greedy sequence $\tau_i^1(\alpha)$ (resp. $\tau_j^2(\alpha/2^{p-1})$) associated with the control $\omega^1$ (resp. $\omega^2$). Since $\omega^1(\tau_i^1, \tau_{i+1}^1) = \alpha$ for all $i =0,...,N_{\alpha,[s,t]}(\omega^1)-1$, we have $\omega^2(\tau_i^1, \tau_{i+1}^1) \geq \alpha/2^{p-1}$. Thus, given consecutive times $\tau_i^1< \tau_{i+1}^1$, one can always find a value of $j \in \{0,..., N_{\alpha/2^{p-1}, [s,t]}(\omega^2)\}$ such that $\tau_i^1 \leq \tau_j^2 \leq \tau^1_{i+1}$. To see this, observe that taking $k_i = \sup\{ \ell: \tau_\ell^2 \leq \tau_i^1 \}$, we have by the superadditivity of $\omega^1$ and $\omega^2$ that 
    \begin{equation*}
        \omega^2(\tau_{k_i}^2, \tau_{i+1}^1) \geq \omega^2(\tau_{k_i}^2, \tau_i^1) + \omega^2(\tau_i^1, \tau_{i+1}^1) \geq \omega^2(\tau_{k_i}^2, \tau_i^1) + \frac{\alpha}{2^{p-1}}\,.
    \end{equation*}
    It follows that setting $\tau_i^1 \leq \tau_{k_i+1}^2 \leq \tau_{i+1}^1$, and also that $i \leq k_i$. Thus we have 
    \begin{equation*}
        \tau_{i}^2 \leq \tau_{k_i}^2 \leq \tau_{i}^1\,,
    \end{equation*}
    for $i \in \{0, ..., N_{\alpha, [s,t]}(\omega^1)\}$. Finally we have
    \begin{align*}
        N_{\alpha, [s,t]}(\omega^1) &=\sup\{n \in \mathbb{N}\cup \{0\}: \tau^1_n(\alpha) < t\} \\
        &\leq  \sup\{n \in \mathbb{N}\cup \{0\}: \tau^2_n(\alpha/2^{p-1}) < t\} \\
        &= N_{\frac{\alpha}{2^{p-1}},[s,t]}(\omega^2)\,,
    \end{align*}
    which completes the proof.
\end{proof}

Next, we investigate how the strong convergence of $\mathbf{X}^\lambda \rightarrow \mathbf{X}$ in the inhomogeneous $p$-rough path metric affects the tails of $N_{\alpha,[s,t]}\big(\Vert \mathbf{X}^\lambda\Vert_{p-var}^p \big)$ and $N_{\alpha,[s,t]}\big(\Vert \mathbf{X} - \mathbf{X}^\lambda \Vert_{p-var}^p\big)$.
\begin{lemma}\label{lem:greedy-num-remainder}
    Suppose that $\mathbf{X}^\lambda \rightarrow \mathbf{X}$ strongly in the inhomogeneous $p$-rough path metric $\rho_{p-var}$, with rate $\delta(\lambda)$. Then
    \begin{equation*}
        \Pb \left( N_{\alpha, [s,t]}\big(\Vert \mathbf{X} - \mathbf{X}^\lambda\Vert_{p-var}^p\big) >0  \right) = o(\lambda^{-r})\,,
    \end{equation*}
    for all $r > 0$.
\end{lemma}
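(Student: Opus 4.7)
The plan is to reduce the event $\{N_{\alpha,[s,t]}(\omega) > 0\}$ to a purely deterministic inequality on the total control, and then apply the strong-convergence hypothesis. The key observation is that, by Definition \ref{def:greedy-partition}, $N_{\alpha,[s,t]}(\omega) > 0$ is equivalent to $\tau_1(\alpha) < t$. Superadditivity of the control $\omega$ implies monotonicity of $u \mapsto \omega(s,u)$, so if $\omega(s,t) < \alpha$ then no $u \leq t$ achieves $\omega(s,u) \geq \alpha$; the defining infimum is over an empty set and $\tau_1 = t$, contradicting $N > 0$. Hence $\{N_{\alpha,[s,t]}(\omega) > 0\} \subseteq \{\omega(s,t) \geq \alpha\}$.

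Applying this with $\omega(u,v) = \Vert \mathbf{X} - \mathbf{X}^\lambda\Vert_{p-var;[u,v]}^p$ (or, more precisely, the superadditive majorant already used implicitly in Lemma \ref{lem:tail-inclusion}; this enlarges neither side) I would obtain the inclusion
\[
\bigl\{N_{\alpha,[s,t]}\bigl(\Vert \mathbf{X} - \mathbf{X}^\lambda\Vert_{p-var}^p\bigr) > 0\bigr\} \subseteq \bigl\{\Vert \mathbf{X} - \mathbf{X}^\lambda\Vert_{p-var;[s,t]}^p \geq \alpha\bigr\},
\]
so the lemma reduces to controlling the tail of $\Vert \mathbf{X} - \mathbf{X}^\lambda\Vert_{p-var;[s,t]}$.

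For the tail, fix $r > 0$ and let $k = k(r,T)$ be the constant from the strong-convergence assumption so that $\Pb(\rho_{p-var}(\mathbf{X}, \mathbf{X}^\lambda) \geq k\delta(\lambda)) = o(\lambda^{-r})$. Since $\delta(\lambda) \to 0$, there exists $\lambda_0 = \lambda_0(r,\alpha,k,p)$ with $(k\delta(\lambda))^p \leq \alpha$ for every $\lambda \geq \lambda_0$. Because $\Vert \mathbf{X} - \mathbf{X}^\lambda\Vert_{p-var;[s,t]}$ is dominated by $\rho_{p-var}(\mathbf{X}, \mathbf{X}^\lambda)$ by definition of the inhomogeneous metric, for $\lambda \geq \lambda_0$ the event $\{\Vert \mathbf{X} - \mathbf{X}^\lambda\Vert_{p-var;[s,t]}^p \geq \alpha\}$ is contained in $\{\rho_{p-var}(\mathbf{X}, \mathbf{X}^\lambda) \geq k\delta(\lambda)\}$, and strong convergence closes the argument for that $r$. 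Since $r$ was arbitrary, the claim follows.

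The argument is essentially bookkeeping: the substantive content is packaged into the strong-convergence hypothesis, and the combinatorial machinery of the greedy partition is only used through the collapse $\{N > 0\} \subseteq \{\omega(s,t) \geq \alpha\}$ in Step 1. The only mild obstacle is the verification that $\Vert \mathbf{X} - \mathbf{X}^\lambda\Vert_{p-var}^p$ may legitimately be treated as a control in the sense of Definition \ref{def:greedy-partition}, which I would handle by a standard passage to a continuous superadditive majorant.
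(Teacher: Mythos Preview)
Your approach matches the paper's: reduce $\{N_{\alpha,[s,t]}(\omega)>0\}$ to $\{\omega(s,t)\geq\alpha\}$ via superadditivity, then absorb the resulting fixed-threshold tail into the strong-convergence hypothesis once $k\delta(\lambda)$ falls below that threshold.

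The one imprecision is the claim that $\Vert \mathbf{X}-\mathbf{X}^\lambda\Vert_{p-var}$ is dominated by $\rho_{p-var}(\mathbf{X},\mathbf{X}^\lambda)$ ``by definition of the inhomogeneous metric''. In the paper's convention the homogeneous quantity is
\[
\Vert\mathbf{X}-\mathbf{X}^\lambda\Vert_{p-var}^p=\Vert X-X^\lambda\Vert_{p-var}^p+\Vert\mathbb{X}-\mathbb{X}^\lambda\Vert_{p/2-var}^{p/2},
\]
so the second-level contribution enters with exponent $p/2$, not $p$; when $\rho:=\rho_{p-var}(\mathbf{X},\mathbf{X}^\lambda)<1$ one has $\rho^{p/2}>\rho^p$ and the claimed domination can fail. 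The paper deals with this by bounding $\Vert\mathbf{X}-\mathbf{X}^\lambda\Vert_{p-var}^p\leq 2\max\{\rho^p,\rho^{p/2}\}$ and splitting into the two cases, which still yields a fixed $\varepsilon=\varepsilon(\alpha,p)>0$ with $\{\Vert\mathbf{X}-\mathbf{X}^\lambda\Vert_{p-var}^p\geq\alpha\}\subseteq\{\rho\geq\varepsilon\}$. With this bookkeeping correction your argument is identical to the paper's.
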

\begin{proof}
    We note that $N_{\alpha, [s,t]}(\omega) >0$ if and only if there exists some $\tau < t$ such that $\omega(s,\tau) \geq \alpha$. Since $\omega$ is superadditive, this implies $\omega(s,t) \geq \alpha$. Thus, 
    \begin{align*}
        \Pb \left( N_{\alpha, [s,t]} \big( \Vert \mathbf{X} - \mathbf{X}^\lambda \Vert_{p-var}^p \big) >0 \right) &\leq \Pb\left( \Vert \mathbf{X} - \mathbf{X}^\lambda \Vert_{p-var}^p \geq \alpha \right).
    \end{align*}
   Bounding the homogeneous norm by
    \begin{align*}
        \Vert \mathbf{X} - \mathbf{X}^\lambda \Vert_{p-var}^p &= \Vert X - X^\lambda\Vert_{p-var}^p + \Vert \mathbb{X} - \mathbb{X}^\lambda\Vert_{p/2-var}^{p/2} \\
        &\leq 2 \max\left\{ \rho_{p-var}(\mathbf{X}, \mathbf{X}^\lambda)^p, \rho_{p-var}(\mathbf{X}, \mathbf{X}^\lambda)^{p/2} \right\},
    \end{align*}
    we see that 
    \begin{align*}
        \Pb \left( N_{\alpha, [s,t]} \big( \Vert \mathbf{X} - \mathbf{X}^\lambda \Vert_{p-var}^p \big) >0 \right) &\leq \Pb \left( \max\left\{ \rho_{p-var}(\mathbf{X}, \mathbf{X}^\lambda)^p, \rho_{p-var}(\mathbf{X}, \mathbf{X}^\lambda)^{p/2} \right\} \geq \frac{\alpha}{2} \right) \\
        &\leq \Pb \left( \rho_{p-var}(\mathbf{X}, \mathbf{X}^\lambda) \geq \left( \frac{\alpha}{2}\right)^{1/p} \right) + \Pb \left( \rho_{p-var}(\mathbf{X}, \mathbf{X}^\lambda) \geq \left( \frac{\alpha}{2}\right)^{2/p} \right)\\
        &\leq 2 \Pb \left( \rho_{p-var}(\mathbf{X}, \mathbf{X}^\lambda) \geq \varepsilon \right),
    \end{align*}
   with $\varepsilon = \max\{ (\alpha/2)^{1/p}, (\alpha/2)^{2/p}\}$. Finally, we pick a $\lambda'$ such that $k \delta(\lambda) \leq \varepsilon$ for all $\lambda > \lambda'$. This yields 
   \begin{equation*}
    2 \Pb \left( \rho_{p-var}(\mathbf{X}, \mathbf{X}^\lambda) \geq \varepsilon \right) \leq 2 \Pb \left( \rho_{p-var}(\mathbf{X}, \mathbf{X}^\lambda ) \geq k\delta(\lambda) \right) = o(\lambda^{-r}),
   \end{equation*}
   as required.
\end{proof}

\begin{lemma}\label{lem:tail-greedy-approx}
    Let $\mathbf{X}^\lambda, \mathbf{X}$ be Gaussian rough paths of finite mixed $(1, \rho)$ variation, and suppose that $\mathbf{X}^\lambda \rightarrow \mathbf{X}$ strongly in inhomogeneous $p$-rough path metric with rate $\delta(\lambda)$, and $\alpha>0$. Then there exists some constant $c$ such that
    \begin{equation*}
        \Pb \left( N_{\alpha,[s,t]}\big(\Vert \mathbf{X}^\lambda\Vert_{p-var}^p \big) \geq u \right) \leq \exp \left\{ - c_1 u^{2/q}\right\}  + o(\lambda^{-r}).
    \end{equation*}
\end{lemma}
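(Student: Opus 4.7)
The strategy is to use Lemma \ref{lem:tail-inclusion} to replace $\Vert \mathbf{X}^\lambda\Vert_{p-var}^p$ by the sum of two controls --- a \emph{reference} part $\omega_1 := \Vert\mathbf{X}\Vert_{p-var}^p$, whose tail is governed by the Cass--Litterer--Lyons estimate recalled just before the lemma statement, and a \emph{remainder} part $\omega_2 := \Vert\mathbf{X}-\mathbf{X}^\lambda\Vert_{p-var}^p$, whose tail is controlled by the strong-convergence assumption via Lemma \ref{lem:greedy-num-remainder} --- and then to separate the two contributions through a pigeonhole argument that plugs into Lemma \ref{lem:subadditivity-greedy}.

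Concretely, Lemma \ref{lem:tail-inclusion} gives, with $\beta := \alpha/2^{p-1}$,
\begin{equation*}
\Pb\big(N_{\alpha,[s,t]}(\Vert\mathbf{X}^\lambda\Vert_{p-var}^p) \geq u\big) \leq \Pb\big(N_{\beta,[s,t]}(\omega_1 + \omega_2) \geq u\big).
\end{equation*}
The key combinatorial step is the inequality
\begin{equation*}
N_{\beta,[s,t]}(\omega_1+\omega_2) \leq N_{\beta/4,[s,t]}(\omega_1) + N_{\beta/4,[s,t]}(\omega_2).
\end{equation*}
To see this, let $(\sigma_i)$ be the greedy partition for $\omega_1+\omega_2$ at level $\beta$. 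On each interval $[\sigma_i,\sigma_{i+1}]$, continuity and equality $(\omega_1+\omega_2)(\sigma_i,\sigma_{i+1}) = \beta$ force at least one of $\omega_1(\sigma_i,\sigma_{i+1}),\omega_2(\sigma_i,\sigma_{i+1})$ to exceed $\beta/2$, and hence strictly exceed $\beta/4$; by continuity of $\omega_j(\sigma_i,\cdot)$ with $\omega_j(\sigma_i,\sigma_i)=0$, the greedy sequence of $\omega_j$ at level $\beta/4$ within $[\sigma_i,\sigma_{i+1}]$ must advance at least once, so $N_{\beta/4,[\sigma_i,\sigma_{i+1}]}(\omega_j) \geq 1$. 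A pigeonhole argument then shows the union of intervals where $\omega_1$ advances together with those where $\omega_2$ advances covers all $N_{\beta,[s,t]}(\omega_1+\omega_2)$ greedy intervals, and summing via Lemma \ref{lem:subadditivity-greedy} yields the claim. The weakening from $\beta/2$ to $\beta/4$ is there purely to rule out edge cases where a within-interval control achieves the threshold exactly at the right endpoint.

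A union bound then gives
\begin{equation*}
\Pb\big(N_{\beta}(\omega_1+\omega_2) \geq u\big) \leq \Pb\big(N_{\beta/4}(\omega_1) \geq u/2\big) + \Pb\big(N_{\beta/4}(\omega_2) \geq u/2\big).
\end{equation*}
The first term is bounded by $\exp(-c_1 u^{2/q})$ via the Cass--Litterer--Lyons estimate (valid for the Gaussian class of rough paths in the hypothesis), after absorbing the factor of $2$ into $c_1$ and, if needed, enlarging the constant to cover the small-$u$ regime where the bound is trivial. For the second term we may assume $u \geq 2$ (otherwise the desired bound is immediate by a constant), and then
\begin{equation*}
\{N_{\beta/4}(\omega_2) \geq u/2\} \subseteq \{N_{\beta/4}(\omega_2) > 0\},
\end{equation*}
which has probability $o(\lambda^{-r})$ by Lemma \ref{lem:greedy-num-remainder}. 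Summing the two pieces yields the claimed bound. The main obstacle is the combinatorial inequality: verifying that the weakening from $\beta/2$ to $\beta/4$ interacts correctly with the strict-inequality convention in Definition \ref{def:greedy-partition} is where the real care is required; everything else is a union bound plus the two cited tail inputs.
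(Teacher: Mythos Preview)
Your proof is correct and follows the same overall architecture as the paper's: apply Lemma~\ref{lem:tail-inclusion}, split the greedy count of the sum $\omega_1+\omega_2$ into the greedy counts of the summands, then use a union bound together with the Cass--Litterer--Lyons tail estimate for $\omega_1$ and Lemma~\ref{lem:greedy-num-remainder} for $\omega_2$. The only substantive difference lies in the splitting step. The paper invokes Lemma~5 of \cite{2013friz:improved-lipschitz-estimate}, which gives $N_{\beta}(\omega_1+\omega_2) \leq 2N_{\beta}(\omega_1) + 2N_{\beta}(\omega_2) + 2$ at the \emph{same} level $\beta$, whereas you prove the variant $N_{\beta}(\omega_1+\omega_2) \leq N_{\beta/4}(\omega_1) + N_{\beta/4}(\omega_2)$ at a reduced level, via a pigeonhole argument combined with the paper's own Lemma~\ref{lem:subadditivity-greedy}. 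Your route is self-contained (no external citation needed) and the weakening from $\beta/2$ to $\beta/4$ correctly sidesteps the strict-inequality convention in Definition~\ref{def:greedy-partition}; the paper's route simply avoids reproving a known inequality. Either choice feeds into the union bound and the two tail inputs in exactly the same way, so the downstream use in Theorem~\ref{thm:strong-convergence} is unaffected.
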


\begin{proof}
By Lemma \ref{lem:tail-inclusion}, we have 
\begin{equation}\label{eq:tail-inclusion-intermediary}
    \Pb \left( N_{\alpha,[s,t]}\big(\Vert \mathbf{X}^\lambda\Vert_{p-var}^p \big) \geq u \right) \leq \Pb \left( N_{\alpha_p,[s,t]}\left(\Vert \mathbf{X}\Vert_{p-var}^p + \Vert \mathbf{X} -\mathbf{X}^\lambda\Vert_{p-var}^p \right) \geq u \right)\,.
\end{equation}
Lemma 5 of \cite{2013friz:improved-lipschitz-estimate} states that 
\begin{equation*}
    N_{\alpha_p,[s,t]}\left(\Vert \mathbf{X}\Vert_{p-var}^p + \Vert \mathbf{X} -\mathbf{X}^\lambda\Vert_{p-var}^p \right) \leq 2 N_{\alpha_p, [s,t]}\big(\Vert \mathbf{X}\Vert_{p-var}^p\big) + 2 N_{\alpha_p, [s,t]}\big(\Vert \mathbf{X}-\mathbf{X}^\lambda\Vert_{p-var}^p\big) + 2\,,
\end{equation*}
which when substituted into \eqref{eq:tail-inclusion-intermediary} yields 
\begin{align*}
    \Pb \big( N_{\alpha,[s,t]}&\big(\Vert \mathbf{X}^\lambda\Vert_{p-var}^p \big) \geq u \big) \leq \Pb\left( N_{\alpha_p, [s,t]}\big(\Vert \mathbf{X}\Vert_{p-var}^p\big) +  N_{\alpha_p, [s,t]}\big(\Vert \mathbf{X}-\mathbf{X}^\lambda\Vert_{p-var}^p\big) \geq \frac{u-2}{2} \right)\\
    &\leq \Pb\left( N_{\alpha_p, [s,t]}\big(\Vert \mathbf{X}\Vert_{p-var}^p\big) \geq \frac{u}{4}  \right) + \Pb \left(N_{\alpha_p, [s,t]}\big(\Vert \mathbf{X}-\mathbf{X}^\lambda\Vert_{p-var}^p\big) \geq \frac{u-4}{4}  \right)    \,.
\end{align*}
Applying the  Weibull  tails of $N_{\alpha,[s,t]}\big(\Vert \mathbf{X}\Vert_{p-var}^p \big)$ and Lemma \ref{lem:greedy-num-remainder}, we see that 
\begin{equation*}
    \Pb \left( N_{\alpha,[s,t]}\big(\Vert \mathbf{X}^\lambda\Vert_{p-var}^p \big) \geq u \right) \leq \exp \left\{ - c_1 u^{2/q} \right\}  + o(\lambda^{-r}).
\end{equation*}

\end{proof}

\section{Lipschitz estimates for rough RSSDE}\label{sec:lipschitz}
In this section, we extend the (local) Lipschitz estimates for rough differential equations to the regime-switching case.

\begin{theorem}\label{thm:sup-estimate}
    Let $\mathbf{X}$ be a Gaussian rough path, $\{\mathbf{X}^\lambda\}_{\lambda >0}$ a family of rough path lifts of finite-variation processes, and $J$ be a Markov process independent of $\mathbf{X}$, $\mathbf{X}^\lambda$ with state space $\mathcal{E}$ and a.s. finite jump activity on compact intervals. Let $\gamma > p > 2$, and $\{V_i \}_{i \in \mathcal{E}}$ be a family of vector fields such that $|V_i|_{Lip^\gamma}$ is uniformly bounded by some $\nu$, that is, $\sup_{i \in \mathcal{E}}|V_i|_{Lip^\gamma} \leq \nu$. 
    
    Now, let $\{0=t_0^J<t_1^J<\cdots<t_{N^J+1}^J = T\}$ be a partition of $[0,T]$ given by the jump times of $J$. Define $Y^k$ and $Y^{k,\lambda}$ to be the RDE solutions to 
    \begin{equation*}
        \di Y^k_t = V_{J_{t_k}}(Y^k_t) \, \di \mathbf{X}_t\,, \quad Y^k_{t_k}= Y^{k-1}_{t_k}\,, \quad t \in [t_k, t_{k+1}],
    \end{equation*}
    and 
    \begin{equation*}
        \di Y^{k,\lambda}_t = V_{J_{t_k}}(Y^{k,\lambda}_t) \, \di \mathbf{X}^\lambda_t\,, \quad Y^{k,\lambda}_{t_k}= Y^{k-1, \lambda}_{t_k}\,, \quad t \in [t_k, t_{k+1}],
    \end{equation*}
    respectively, and define $Y$ (resp. $Y^\lambda$) to be the concatenation of the $\{Y^k\}_{k=0}^{N^J}$ (resp. $\{Y^{k, \lambda}\}_{k=0}^{N^J}$). Then, there exists some constant $C= C(\gamma, p, \nu, \alpha)$ such that
    \begin{equation*}
        \Vert Y - Y^\lambda\Vert_{\infty; [0,T]} \leq K_1 C^{N^J} \rho_{p-var;[0,T]}(\mathbf{X}, \mathbf{X}^\lambda) \exp\left\{ C \left( N_{\alpha, [0,T]}(\mathbf{X}) + N_{\alpha, [0,T]}(\mathbf{X}^\lambda) \right)  \right\},
    \end{equation*}
    where $K_1 = C^2/(C-1)$.

\end{theorem}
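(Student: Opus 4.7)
The plan is to apply the improved Lipschitz estimate of \cite{2013friz:improved-lipschitz-estimate} on each regime interval $[t_k,t_{k+1}]$, where the vector field $V_{J_{t_k}}$ is frozen and the problem reduces to a standard RDE, and then to propagate the errors across the $N^J+1$ regimes by induction. Concretely, set
\begin{equation*}
E_k := \bigl\Vert Y^k - Y^{k,\lambda}\bigr\Vert_{\infty;[t_k,t_{k+1}]}, \qquad A_k := \exp\bigl\{C\bigl(N_{\alpha,[t_k,t_{k+1}]}(\mathbf{X}) + N_{\alpha,[t_k,t_{k+1}]}(\mathbf{X}^\lambda)\bigr)\bigr\}, \qquad \rho := \rho_{p-var;[0,T]}(\mathbf{X},\mathbf{X}^\lambda).
\end{equation*}
Since the $k$-th RDE is driven by the single vector field $V_{J_{t_k}}$, whose $\mathrm{Lip}^\gamma$-norm is uniformly bounded by $\nu$, the Friz--Riedel estimate applied on $[t_k,t_{k+1}]$ with initial data $Y^{k-1}_{t_k}$ and $Y^{k-1,\lambda}_{t_k}$ yields a constant $C=C(\gamma,p,\nu,\alpha)$ such that
\begin{equation*}
E_k \;\le\; C\, A_k \Bigl(\bigl|Y^{k-1}_{t_k} - Y^{k-1,\lambda}_{t_k}\bigr| + \rho_{p-var;[t_k,t_{k+1}]}(\mathbf{X},\mathbf{X}^\lambda)\Bigr) \;\le\; C\, A_k\bigl(E_{k-1} + \rho\bigr),
\end{equation*}
with the convention $E_{-1}=0$ coming from the common initial condition $Y^0_0 = Y^{0,\lambda}_0 = y_0$.

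Next I would unroll this linear recursion. Iterating gives
\begin{equation*}
E_k \;\le\; \rho \sum_{j=0}^{k} C^{\,k-j+1} \prod_{i=j}^{k} A_i \;\le\; \rho \Bigl(\textstyle\prod_{i=0}^{k} A_i\Bigr)\displaystyle\sum_{\ell=1}^{k+1} C^{\ell} \;\le\; \rho\,\Bigl(\textstyle\prod_{i=0}^{k} A_i\Bigr)\,\frac{C^{k+2}}{C-1},
\end{equation*}
after which the subadditivity statement of Lemma \ref{lem:subadditivity-greedy} (applied to the partition $\{t_k^J\}$ and to both controls $\Vert\mathbf{X}\Vert_{p-var}^p$ and $\Vert\mathbf{X}^\lambda\Vert_{p-var}^p$) collapses the product of exponentials to a single exponential:
\begin{equation*}
\prod_{i=0}^{k} A_i \;\le\; \exp\Bigl\{C\bigl(N_{\alpha,[0,T]}(\mathbf{X})+N_{\alpha,[0,T]}(\mathbf{X}^\lambda)\bigr)\Bigr\}.
\end{equation*}
Taking $k=N^J$ and noting that $\Vert Y-Y^\lambda\Vert_{\infty;[0,T]} = \max_{0\le k\le N^J} E_k$ gives the claimed bound with $K_1 = C^2/(C-1)$.

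\paragraph{Main obstacle.}
The delicate point is the local step: the Friz--Riedel estimate from \cite{2013friz:improved-lipschitz-estimate} is typically stated for equal initial data, so I would need either to invoke a version that separates the contribution of the initial condition difference (which appears linearly with the same exponential prefactor, since the flow of an RDE with $\mathrm{Lip}^\gamma$ vector fields is itself locally Lipschitz with a comparable bound), or to decompose $\Vert Y^k - Y^{k,\lambda}\Vert_{\infty;[t_k,t_{k+1}]}$ through an intermediate RDE $\widetilde Y^k$ started at $Y^{k-1,\lambda}_{t_k}$ and driven by $\mathbf{X}$, applying the Lipschitz bound to $Y^k - \widetilde Y^k$ and flow-continuity to $\widetilde Y^k - Y^{k,\lambda}$. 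A secondary care point is uniformity of the constant $C$ across regimes, which is precisely what the hypothesis $\sup_{i\in\mathcal{E}}|V_i|_{\mathrm{Lip}^\gamma}\le\nu$ guarantees, so that $C$ may be chosen independently of $k$; everything else is routine bookkeeping.
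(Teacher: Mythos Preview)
Your proposal is correct and follows essentially the same route as the paper: apply the improved Lipschitz estimate (with the initial-condition difference already separated, exactly as in the paper's Equation \eqref{eq:theorem4-friz}) on each regime interval, iterate the resulting linear recursion, and collapse the product of exponentials via Lemma~\ref{lem:subadditivity-greedy}. Your flagged ``main obstacle'' is not in fact an obstacle here---the cited estimate already includes the $|Y^{k-1}_{t_k}-Y^{k-1,\lambda}_{t_k}|$ term with the same constant and exponential prefactor, so no auxiliary decomposition through an intermediate $\widetilde Y^k$ is needed.
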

\begin{proof}
    Under the proposed conditions each $Y^{k}$ and $Y^{k,\lambda}$ exists uniquely and is continuous, and these properties extend to the concatenation $Y$ and $Y^\lambda$. Theorem 4 of \cite{friz2012integrability} yields the estimate
    \begin{align}
        \Vert Y^k - Y^{k, \lambda} \Vert_{\infty; [t_k, t_{k+1}]} &\leq C \left[ |Y_{t_k}^k - Y^{k, \lambda}_{t_k}| + \rho_{p-var;[t_k, t_{k+1}]}(\mathbf{X}, \mathbf{X}^\lambda) \right] \label{eq:theorem4-friz} \\
        & \qquad \times \exp\left\{C \big( N_{\alpha;[t_k, t_{k+1}]}(\mathbf{X}) + N_{\alpha;[t_k, t_{k+1}]}(\mathbf{X}^\lambda) \big) \right\} \nonumber
    \end{align}
    for each $k=0, \dots, J$. Noting that 
    \begin{align*}
        |Y_{t_k}^k - Y_{t_k}^{k, \lambda} | &\leq \Vert Y^k - Y^{k, \lambda} \Vert_{\infty; [t_{k-1}, t_k]} \\
        &\leq C \left[ |Y_{t_{k-1}}^{k-1} - Y^{k-1, \lambda}_{t_{k-1}}| + \rho_{p-var;[t_{k-1}, t_{k}]}(\mathbf{X}, \mathbf{X}^\lambda) \right] \\
        & \qquad \times \exp\left\{C \big( N_{\alpha;[t_{k-1}, t_{k}]}(\mathbf{X}) + N_{\alpha;[t_{k-1}, t_{k}]}(\mathbf{X}^\lambda) \big) \right\}\,,
    \end{align*}
    we can rewrite Equation \eqref{eq:theorem4-friz} as 
    \begin{align*}
        \Vert Y^k - Y^{k, \lambda}&\Vert_{\infty;[t_k, t_{k+1}]} \leq C \rho_{p-var;[t_k, t_{k+1}]}(\mathbf{X}, \mathbf{X}^\lambda) \exp\left\{C \big( N_{\alpha;[t_k, t_{k+1}]}(\mathbf{X}) + N_{\alpha;[t_k, t_{k+1}]}(\mathbf{X}^\lambda) \big) \right\} \\
        &+ C^2 \left[ |Y_{t_{k-1}}^{k-1} - Y^{k-1, \lambda}_{t_{k-1}}| + \rho_{p-var;[t_{k-1}, t_{k}]}(\mathbf{X}, \mathbf{X}^\lambda) \right] \\
        &  \times\exp\left\{C \big( N_{\alpha;[t_{k-1}, t_{k}]}(\mathbf{X}) + N_{\alpha;[t_k, t_{k+1}]}(\mathbf{X}) +  N_{\alpha;[t_{k-1}, t_{k}]}(\mathbf{X}^\lambda) + N_{\alpha;[t_k, t_{k+1}]}(\mathbf{X}^\lambda) \big) \right\}\,.
    \end{align*}
    Iterating back to $[0, t_1]$ yields 
    \begin{align*}
        \Vert Y^k - Y^{k, \lambda}\Vert_{\infty;[t_k, t_{k+1}]} &\leq \sum_{j=0}^{k} C^{j+1} \rho_{p-var; [t_{k-j}, t_{k+1-j}]}(\mathbf{X}, \mathbf{X}^\lambda) \\
        &  \times \exp \left\{C \left( \sum_{i=0}^j N_{\alpha;[t_{k-i}, t_{k+1-i}]}(\mathbf{X}) + N_{\alpha;[t_{k-i}, t_{k+1-i}]}(\mathbf{X}^\lambda) \right) \right\}\,.
    \end{align*}
    By Lemma \ref{lem:subadditivity-greedy} and the fact that $\rho_{p-var;[u,v]}(\cdot, \cdot) \leq \rho_{p-var;[s,t]}(\cdot, \cdot)$ if $[u,v] \subseteq [s,t]$, we have
    \begin{equation*}
        \Vert Y^k - Y^{k, \lambda}\Vert_{\infty; [t_k, t_{k+1}]} \leq \left( \sum_{j=1}^{k+1} C^j \right) \rho_{p-var;[0,t_{k+1}]}(\mathbf{X}, \mathbf{X}^\lambda) \exp \left\{ C \left( N_{\alpha,[0, t_{k+1}]}(\mathbf{X}) + N_{\alpha, [0,t_{k+1}]}(\mathbf{X}^\lambda) \right) \right\}\,.
    \end{equation*}
    Finally, noting that 
    \begin{equation*}
        \Vert Y - Y^\lambda\Vert_{\infty; [0,T]} = \max_{k=0,...,J} \Vert Y^k - Y^{k,\lambda}\Vert_{\infty; [t_k, t_{k+1}]}\,,
    \end{equation*}
    we arrive at the estimate
    \begin{align*}
        \Vert Y - Y^\lambda\Vert_{\infty; [0,T]} &\leq  \left( \sum_{j=1}^{J+1} C^j \right) \rho_{p-var;[0,T]}(\mathbf{X}, \mathbf{X}^\lambda) \exp\left\{ C \left( N_{\alpha, [0,T]}(\mathbf{X}) + N_{\alpha, [0,T]}(\mathbf{X}^\lambda) \right)  \right\}\\
        &\leq K_1 K_2^{J} \rho_{p-var;[0,T]}(\mathbf{X}, \mathbf{X}^\lambda) \exp\left\{ C \left( N_{\alpha, [0,T]}(\mathbf{X}) + N_{\alpha, [0,T]}(\mathbf{X}^\lambda) \right)  \right\}
    \end{align*}
    with $K_1 = C^2/(C-1)$ and $K_2 = C$.
\end{proof}

\section{Strong convergence}\label{sec:strong-convergence}
To establish the strong convergence of $Y^\lambda \rightarrow Y$ in supremum norm, we apply the methodology in \cite{nguyen2021rate} of utilising the probabilistic properties of the local Lipschitz coefficient of the solution map (Theorem \ref{thm:sup-estimate}). As in \cite{nguyen2021rate}, we impose a constraint on the jump process $J$ to control the growth of the constant $K_2^J$ appearing in Theorem \ref{thm:sup-estimate}.

\begin{assumption}\label{asm:jump-tails}
There exists some $\gamma_0 >0$ such that $\mathbb{P}\big( N^J > j) = o \big( \mathrm{e}^{-j (\log j - \gamma_0 )}\big)$.
\end{assumption}
Assumption \ref{asm:jump-tails} clearly holds if $J$ is deterministic. We refer to Lemma 4.3 of \cite{nguyen2021rate} which implies that Assumption \ref{asm:jump-tails} holds whenever $J$ is of bounded jump intensity.

\begin{lemma}
    Let $J$ be a jump process satisfying Assumption \ref{asm:jump-tails}. Then the number of jumps $N^J$ has finite expectation.
\end{lemma}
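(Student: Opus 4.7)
The plan is to use the standard layer-cake identity for non-negative integer-valued random variables, writing
\begin{equation*}
    \mathbb{E}[N^J] = \sum_{j=0}^{\infty} \mathbb{P}(N^J > j),
\end{equation*}
and then showing that the series on the right converges by exploiting the super-exponential tail decay provided by Assumption \ref{asm:jump-tails}.

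First I would note that, by the definition of little-$o$, there exists some $j_0 \in \mathbb{N}$ such that for all $j \geq j_0$,
\begin{equation*}
    \mathbb{P}(N^J > j) \leq \mathrm{e}^{-j(\log j - \gamma_0)} = \left(\frac{\mathrm{e}^{\gamma_0}}{j}\right)^{j}.
\end{equation*}
Splitting $\sum_{j \geq 0} \mathbb{P}(N^J > j)$ into the finite head $\sum_{j=0}^{j_0 - 1}$ (which is trivially bounded, since each probability is at most 1) and the tail $\sum_{j \geq j_0}$, it remains only to control the tail series.

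For the tail, I would choose $j_1 \geq j_0$ large enough so that $j \geq j_1$ implies $\mathrm{e}^{\gamma_0}/j \leq 1/\mathrm{e}$; concretely any $j_1 \geq \lceil \mathrm{e}^{\gamma_0 + 1}\rceil$ works. Then for $j \geq j_1$,
\begin{equation*}
    \mathbb{P}(N^J > j) \leq \left(\frac{\mathrm{e}^{\gamma_0}}{j}\right)^j \leq \mathrm{e}^{-j},
\end{equation*}
and $\sum_{j \geq j_1} \mathrm{e}^{-j}$ is a convergent geometric series. Absorbing the finitely many remaining terms $j_0 \leq j < j_1$ (each finite) into the bound gives $\mathbb{E}[N^J] < \infty$, as required.

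This is a straightforward calculation rather than a deep argument, so there is no real obstacle; the only point that deserves a line of care is translating the $o(\cdot)$ statement into an honest pointwise bound valid beyond some threshold, which is exactly what the definition of $o(\cdot)$ gives. The key structural observation is simply that $j \log j$ dominates the linear term $\gamma_0 j$, so the tail decays faster than any geometric rate and is therefore trivially summable.
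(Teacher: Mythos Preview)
Your proof is correct and follows essentially the same approach as the paper's: both use the tail-sum identity $\mathbb{E}[N^J]=\sum_{j\geq 0}\mathbb{P}(N^J>j)$, split off a finite head, and then dominate the tail by a convergent geometric series (the paper uses $\mathrm{e}^{-j/2}$ where you use $\mathrm{e}^{-j}$). If anything, your version is slightly more explicit about the threshold at which the geometric bound kicks in.
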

\begin{proof}
    Using the tail sum formula for expectation, we see that 
    \begin{equation*}
        \E \left[N^J\right] = \sum_{j=0}^\infty \Pb (N^J > j).
    \end{equation*}
    Since $\Pb(N^J > j) = o(\mathrm{e}^{-j (\log j - \gamma_0)})$, for large enough $j_0$ we have
    \begin{equation*}
        \E[ N^J] = \sum_{j= 0}^{j_0-1}  \Pb (N^J > j)  + \sum_{j=j_0}^\infty \mathrm{e}^{-j( \log j - \gamma_0)}
            \leq C_{j_0} + k \sum_{j=j_0}^\infty  \mathrm{e}^{-j/2} < \infty.
    \end{equation*}
\end{proof}

We are now in a position to show that $Y^\lambda \rightarrow Y$ strongly in supremum norm with rate slightly worse than $\mathbf{B}^\lambda \rightarrow \mathbf{B}$ in rough path metric.

\begin{theorem}\label{thm:strong-convergence} Under the conditions of Theorem \ref{thm:sup-estimate} and Assumption \ref{asm:jump-tails}, suppose that there exists $\delta: \mathbb{R}^+ \rightarrow \mathbb{R}^+$ with $\lim_{\lambda \rightarrow \infty} \delta(\lambda) = 0$ such that for all $r>0$ 
\begin{equation*}
    \mathbb{P}\big( \rho_{p-var;[0,T]}(\mathbf{B}^\lambda, \mathbf{B}) \geq k\delta(\lambda)\big) = o(\lambda^{-r})\,,
\end{equation*}
where $k=k(r,T)>0$ is a constant dependent on $r$ and $T$ only. Then there exists some constant $\beta =\beta(r,T)$ such that for all $\varepsilon >0$ 
\begin{equation}
    \mathbb{P}\big( \Vert  Y- Y^\lambda\Vert_{\infty; [0,T]} \geq \beta \delta(\lambda) \lambda^\varepsilon \big) = o(\lambda^{-r}).
\end{equation}
\end{theorem}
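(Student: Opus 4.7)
The plan is to combine the pathwise bound of Theorem \ref{thm:sup-estimate} with the tail estimates available for each random quantity appearing on its right-hand side. Concretely, on a full-measure set we have
\begin{equation*}
    \|Y - Y^\lambda\|_{\infty;[0,T]} \leq K_1 C^{N^J} \rho_{p-var;[0,T]}(\mathbf{B},\mathbf{B}^\lambda) \exp\bigl\{ C\bigl(N_{\alpha,[0,T]}(\mathbf{B}) + N_{\alpha,[0,T]}(\mathbf{B}^\lambda)\bigr)\bigr\}.
\end{equation*}
The strategy is to control each of the four factors on its own "good" event and handle the complements via a union bound.

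Fix $\varepsilon>0$ and $r>0$. Choose positive constants $a,b,c$ small enough that $a\log C + C(b+c) \leq \varepsilon$, and introduce the events
\begin{align*}
    A_\lambda &= \bigl\{ \rho_{p-var;[0,T]}(\mathbf{B},\mathbf{B}^\lambda) \geq k\delta(\lambda)\bigr\}, &
    B_\lambda &= \bigl\{ N^J > a\log\lambda\bigr\},\\
    D_\lambda &= \bigl\{ N_{\alpha,[0,T]}(\mathbf{B}) > b\log\lambda\bigr\}, &
    E_\lambda &= \bigl\{ N_{\alpha,[0,T]}(\mathbf{B}^\lambda) > c\log\lambda\bigr\}.
\end{align*}
On the intersection of their complements, the pathwise estimate yields
\begin{equation*}
    \|Y-Y^\lambda\|_{\infty;[0,T]} \leq K_1 k\, \delta(\lambda)\, C^{a\log\lambda} e^{C(b+c)\log\lambda} = K_1 k\,\delta(\lambda)\,\lambda^{a\log C + C(b+c)} \leq \beta\,\delta(\lambda)\lambda^{\varepsilon},
\end{equation*}
with $\beta := K_1 k$. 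It therefore suffices to show $\mathbb{P}(A_\lambda \cup B_\lambda \cup D_\lambda \cup E_\lambda) = o(\lambda^{-r})$.

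Each of the four probabilities decays faster than any polynomial in $\lambda$: $\mathbb{P}(A_\lambda) = o(\lambda^{-r})$ is exactly the hypothesis; $\mathbb{P}(B_\lambda) = o(\lambda^{-r})$ follows from Assumption \ref{asm:jump-tails} because $\exp\{-a\log\lambda(\log(a\log\lambda)-\gamma_0)\}$ decays super-polynomially as $\lambda\to\infty$; $\mathbb{P}(D_\lambda) = o(\lambda^{-r})$ follows from the Weibull tail of $N_{\alpha,[0,T]}(\mathbf{B})$ recalled in Section \ref{sec:greedy-partition} (with $q=1$ for Brownian motion, this gives Gaussian tails $\exp\{-c_1(b\log\lambda)^2\}$); and $\mathbb{P}(E_\lambda) = o(\lambda^{-r})$ follows from Lemma \ref{lem:tail-greedy-approx} together with the same super-polynomial decay. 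A union bound then concludes.

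The main obstacle is the simultaneous calibration of $a,b,c$: we need them small enough to keep the exponent $a\log C + C(b+c)$ below $\varepsilon$, yet the growth rates $a\log\lambda$, $b\log\lambda$, $c\log\lambda$ must still lie well inside the super-polynomial decay regime of each tail. This is precisely what makes Brownian rough paths (where the greedy-partition tails are Gaussian) and the Markov-type jump tails of Assumption \ref{asm:jump-tails} the "right" inputs: both tails are sufficiently thin that \emph{any} positive choice of $a,b,c$ produces $o(\lambda^{-r})$ decay, so the calibration constraint $a\log C + C(b+c)\leq\varepsilon$ can always be met for arbitrary $\varepsilon>0$.
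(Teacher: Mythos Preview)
Your proof is correct and follows the same overall architecture as the paper's: apply the pathwise bound of Theorem~\ref{thm:sup-estimate}, isolate four ``good'' events (one for each random factor), and use a union bound on the complements. The difference lies in the choice of thresholds. The paper takes the jump-count threshold to be $A_\lambda := f_{\gamma_0,r}^{-1}(\lambda)$ (the natural inverse of the tail rate in Assumption~\ref{asm:jump-tails}) and uses $\sqrt{(r+\varepsilon)\log\lambda/c_1}$ for the two greedy-partition counts; it then needs to invoke an asymptotic argument from \cite{nguyen2021rate} (via Lambert--$W$ functions) to show $K_2^{A_\lambda}\lambda^{-\varepsilon/2}\to 0$, together with the observation $e^{\sqrt{\log\lambda}}\lambda^{-\varepsilon/2}\to 0$. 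Your calibration is more elementary: you take all three thresholds proportional to $\log\lambda$ with small free constants $a,b,c$, so that the random prefactor becomes exactly $\lambda^{a\log C + C(b+c)}$ and the constraint $a\log C + C(b+c)\le\varepsilon$ can be enforced directly. This bypasses the Lambert--$W$ step entirely, at the modest cost of verifying that the tail bounds for $N^J$, $N_{\alpha}(\mathbf{B})$ and $N_{\alpha}(\mathbf{B}^\lambda)$ still yield $o(\lambda^{-r})$ at thresholds $a\log\lambda$, $b\log\lambda$, $c\log\lambda$ for \emph{arbitrarily small} $a,b,c$---which they do, since each tail decays super-polynomially. Your route is shorter and more transparent; the paper's route stays closer to the template of \cite{nguyen2021rate}.
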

\begin{proof}
    Using the estimate in Theorem \ref{thm:sup-estimate}, we see that 
    \begin{align*}
        \Pb \left( \Vert Y- Y^\lambda\Vert_{\infty;[0,T]} \geq \beta \delta(\lambda) \lambda^{\varepsilon}  \right) &\leq \Pb \bigg(K_1 K_2^J \rho_{p-var;[0,T]}(\mathbf{B}, \mathbf{B}^\lambda) \\
        & \qquad \quad \times \exp\left\{ C \left( N_{\alpha, [0,T]}(\mathbf{B}) + N_{\alpha, [0,T]}(\mathbf{B}^\lambda) \right)  \right\} \geq \beta \delta(\lambda) \lambda^\varepsilon \bigg) \,.
    \end{align*}
    Let $f_{\gamma_0, r}(x):= \exp\big(x ( \log x - \gamma_0)/r\big)$ and $A_\lambda := f^{-1}_{\gamma_0, r}(\lambda)$ as in \cite{nguyen2021rate}. Write 
    \begin{equation*}
        E_\lambda = \left\{ K_1 K_2^J \rho_{p-var;[0,T]}(\mathbf{B}, \mathbf{B}^\lambda)  \exp\left\{ C \left( N_{\alpha, [0,T]}(\mathbf{B}) + N_{\alpha, [0,T]}(\mathbf{B}^\lambda) \right)  \right\} \geq \beta \delta(\lambda) \lambda^\varepsilon \right\},
    \end{equation*}
    and introduce the events
    \begin{align*}
        F_1 &= \left\{ J \leq A_\lambda  \right\}, \\
        F_2 &= \left\{ N_{\alpha,[0,T]}(\mathbf{B}) \leq \sqrt{\frac{(r+\varepsilon) \log \lambda}{c_1 }} \right\},\\
        F_3 &= \left\{N_{\alpha, [0,T]}(\mathbf{B}^\lambda) \leq \sqrt{\frac{(r+\varepsilon) \log \lambda}{c_1 }}\right\}.
    \end{align*}
    As
    \begin{equation}\label{eq:event-inequality}
        \Pb(E_\lambda) \leq \Pb(E_\lambda\cap F_1 \cap F_2 \cap F_3) + \Pb(F_1^\mathrm{c}) + \Pb(F_2^\mathrm{c}) + \Pb( F_3^\mathrm{c})\,,
    \end{equation}
    we can show strong convergence by showing that every term on the RHS of \eqref{eq:event-inequality} is $o(\lambda^{-r})$.
    Lemma \ref{lem:tail-greedy-approx} shows that 
    \begin{align*}
        \Pb(F_3^\mathrm{c}) &= \Pb \left( N_{\alpha, [0,T]}(\mathbf{B}^\lambda) \geq \sqrt{\frac{(r+\varepsilon)\log \lambda}{c_1 }}\right)  \\
        &\leq \exp \left\{ - \frac{c_1  (r+\varepsilon)\log \lambda}{c_1 } \right\} + o(\lambda^{-r})\\
        &= \frac{1}{\lambda^{r+\varepsilon}} + o(\lambda^{-r}) \\
        &= o(\lambda^{-r})\,,
    \end{align*}
    while the Gaussian tails of $N_{\alpha, [0,T]}(\mathbf{B})$ yield
    \begin{align*}
        \Pb(F_2^\mathrm{c}) &= \Pb \left( N_{\alpha,[0,T]}(\mathbf{B}) \geq \sqrt{\frac{(r+\varepsilon) \log \lambda}{c_1 }} \right) \\
        &\leq \exp\left\{- \frac{c_1  (r+\varepsilon)\log \lambda}{c_1 } \right\} \\
        &= \frac{1}{\lambda^{r+\varepsilon}} = o(\lambda^{-r}).
    \end{align*}
    The proof of Theorem 4.6 of \cite{nguyen2021rate} shows $\mathbb{P}\big(F_1^{\mathrm{c}}\big) =  \mathbb{P} \big( N^J > A_\lambda \big) = o(\lambda^{-r})$. Thus, \eqref{eq:event-inequality} becomes
    \begin{align*}
        \Pb(E_\lambda) & \leq \Pb \bigg( K_1 K_2^{A_\lambda} \rho_{p-var[0,T]}(\mathbf{B}, \mathbf{B}^\lambda) \\
        & \qquad \exp\left\{ C \left( \sqrt{\frac{(r+\varepsilon) \log \lambda}{c_1 }}  + \sqrt{\frac{(r+\varepsilon) \log \lambda}{c_1 }}\right)  \right\} \geq \beta \delta(\lambda) \lambda^{\varepsilon} \bigg) + o(\lambda^{-r})\,.
    \end{align*}
    Collecting constants that are independent of $\lambda$, the above becomes 
    \begin{equation*}
        \Pb(E_\lambda) \leq \Pb \left(\widetilde{K} K_2^{A_\lambda} \exp\left\{ \sqrt{\log \lambda}\right\} \lambda^{-\varepsilon_2} \rho_{p-var;[0,T]}(\mathbf{B}, \mathbf{B}^\lambda) \geq \beta \delta(\lambda) \right) + o(\lambda^{-r}),
    \end{equation*}
    for some constant $\widetilde{K}$. We will now show that 
    \begin{equation*}
        K_2^{A_\lambda} \exp \left\{ \sqrt{ \log \lambda} \right\} \lambda^{-\varepsilon} \rightarrow 0.
    \end{equation*}
    We have that $K_2^{A_\lambda} \lambda^{-\varepsilon/2} \rightarrow 0$ from \cite{nguyen2021rate} via the asymptotic decomposition of Lambert--W functions. Next, 
    \begin{equation*}
        \exp\{ \sqrt{\log{\lambda}} \} \lambda^{-\varepsilon/2} = \exp \left\{ \sqrt{\log \lambda} - \frac{\varepsilon}{2} \log \lambda \right\} \rightarrow 0\,.
    \end{equation*}
    As a result, we may choose some $\lambda'$ such that $K_2^{A_\lambda} \exp \left\{ \sqrt{\log \lambda} \right\} \lambda^{-\varepsilon/2} \leq 1$, for all $\lambda > \lambda'$. Thus, for all $\lambda > \lambda'$, 
    \begin{align*}
        \Pb(E_\lambda) \leq \Pb(\tilde{K} \rho_{p-var;[0,T]} ( \mathbf{B}, \mathbf{B}^\lambda) \geq \beta \delta(\lambda))\,.
    \end{align*}
    Finally, setting $\beta:=k/ \tilde{K}$, we see that 
    \begin{equation*}
        \Pb(E_\lambda) \leq \Pb \big( \rho_{p-var;[0,T]}(\mathbf{B}, \mathbf{B}^\lambda) \geq k \delta(\lambda) \big) = o(\lambda^{-r})\,,
    \end{equation*}
    as required.
\end{proof}

\begin{theorem}
    Let $\mathbf{X}$ and $\{\mathbf{X}^\lambda \}_{\lambda \geq 0}$ be Gaussian rough paths, and $J$ a jump process independent of $\mathbf{X}$ and $\mathbf{X}^\lambda$ satisfying Assumption \ref{asm:jump-tails}. Suppose there exists $\delta : \R^+ \rightarrow \R^+$ with $\lim_{\lambda \rightarrow \infty} \delta(\lambda) =0$ such that for all $r>0$ 
    \begin{equation*}
        \Pb \left( \rho_{p-var;[0,T]}(\mathbf{X}^\lambda, \mathbf{X}) \geq k \delta(\lambda)\right) = o(\lambda^{-r}),
    \end{equation*}
    where $k=k(r,T)>0$ is a constant dependent on $r$ and $T$ only. Then there exists some constant $\beta = \beta(r,T)$ such that for all $\varepsilon >0$, 
    \begin{equation*}
        \Pb\left(\Vert Y - Y^\lambda \Vert_{\infty;[0,T]} \geq \beta \delta(\lambda) \lambda^{\varepsilon} \right) = o(\lambda^{-r})
    \end{equation*}
\end{theorem}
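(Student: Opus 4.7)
The plan is to follow the proof of Theorem~\ref{thm:strong-convergence} essentially line by line, replacing the Gaussian tail bound for $N_{\alpha,[0,T]}(\mathbf{B})$ (which exploited $q=1$) by the Weibull tail bound $\exp\{-c_1 u^{2/q}\}$ that is available for a general Gaussian rough path $\mathbf{X}$ via Corollary~2 of \cite{friz2012integrability}; its approximation counterpart for $N_{\alpha,[0,T]}(\mathbf{X}^\lambda)$ is already provided by Lemma~\ref{lem:tail-greedy-approx}. Theorem~\ref{thm:sup-estimate} was stated for an arbitrary Gaussian rough path, so the pathwise estimate
\[
    \Vert Y - Y^\lambda\Vert_{\infty;[0,T]} \leq K_1 K_2^{N^J} \rho_{p-var;[0,T]}(\mathbf{X},\mathbf{X}^\lambda) \exp\bigl\{C\bigl(N_{\alpha,[0,T]}(\mathbf{X}) + N_{\alpha,[0,T]}(\mathbf{X}^\lambda)\bigr)\bigr\}
\]
transfers with no modification.

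The first step is to set $E_\lambda := \{ \Vert Y - Y^\lambda\Vert_{\infty;[0,T]} \geq \beta \delta(\lambda)\lambda^\varepsilon\}$ and, exactly as in Theorem~\ref{thm:strong-convergence}, introduce the auxiliary events
\begin{align*}
    F_1 &= \bigl\{N^J \leq A_\lambda\bigr\}, \\
    F_2 &= \bigl\{ N_{\alpha,[0,T]}(\mathbf{X}) \leq M_\lambda \bigr\}, \\
    F_3 &= \bigl\{ N_{\alpha,[0,T]}(\mathbf{X}^\lambda) \leq M_\lambda \bigr\},
\end{align*}
where $A_\lambda := f^{-1}_{\gamma_0, r}(\lambda)$ is unchanged from \cite{nguyen2021rate}, but now I choose
\[
    M_\lambda := \left(\frac{(r+\varepsilon)\log\lambda}{c_1}\right)^{q/2}
\]
so that the Weibull tail gives $\Pb(F_2^{\mathrm{c}}) \leq \exp\{-c_1 M_\lambda^{2/q}\} = \lambda^{-(r+\varepsilon)} = o(\lambda^{-r})$, and Lemma~\ref{lem:tail-greedy-approx} yields $\Pb(F_3^{\mathrm{c}}) = o(\lambda^{-r})$. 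The bound $\Pb(F_1^{\mathrm{c}}) = o(\lambda^{-r})$ transfers verbatim from Theorem~4.6 of \cite{nguyen2021rate} under Assumption~\ref{asm:jump-tails}.

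Next, on the intersection $E_\lambda \cap F_1 \cap F_2 \cap F_3$ the pathwise bound reduces to
\[
    \widetilde{K}\, K_2^{A_\lambda}\, \exp\{2C M_\lambda\}\, \rho_{p-var;[0,T]}(\mathbf{X},\mathbf{X}^\lambda) \geq \beta\delta(\lambda)\lambda^\varepsilon,
\]
so the remaining task is to show that the prefactor $K_2^{A_\lambda}\exp\{2CM_\lambda\}\lambda^{-\varepsilon}$ vanishes as $\lambda\to\infty$. The factor $K_2^{A_\lambda}\lambda^{-\varepsilon/2}\to 0$ is handled verbatim by the Lambert-$W$ asymptotic expansion from \cite{nguyen2021rate}, while $\exp\{2CM_\lambda\}\lambda^{-\varepsilon/2} = \exp\{C'(\log\lambda)^{q/2} - (\varepsilon/2)\log\lambda\}\to 0$ since $(\log\lambda)^{q/2}$ is sublinear in $\log\lambda$ (the admissible range $q<2$ being implicit in the Gaussian hypotheses on $\mathbf{X}$, and $q=1$ in the Brownian case recovering Theorem~\ref{thm:strong-convergence}). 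Absorbing constants and setting $\beta := k/\widetilde{K}$ then yields $\Pb(E_\lambda) \leq \Pb(\rho_{p-var;[0,T]}(\mathbf{X},\mathbf{X}^\lambda) \geq k\delta(\lambda)) + o(\lambda^{-r}) = o(\lambda^{-r})$.

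The main difficulty here is purely bookkeeping: one must simultaneously balance the three thresholds $A_\lambda$, $M_\lambda$, and the rate $\delta(\lambda)$ so that each complementary event is $o(\lambda^{-r})$ while the composite prefactor $K_2^{A_\lambda}\exp\{2CM_\lambda\}$ stays subpolynomial in $\lambda$. Moving from the Brownian case ($q=1$) to general Weibull exponent $2/q$ forces $M_\lambda$ to grow like $(\log\lambda)^{q/2}$ rather than $(\log\lambda)^{1/2}$, which tightens but does not break this balance, so the $\lambda^\varepsilon$ loss in the rate is preserved exactly as in Theorem~\ref{thm:strong-convergence}.
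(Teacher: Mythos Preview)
Your proposal is correct and mirrors the paper's own proof essentially verbatim: the paper simply states that the proof of Theorem~\ref{thm:strong-convergence} goes through once $F_2$ and $F_3$ are replaced by $\bigl\{N_{\alpha,[0,T]}(\mathbf{X}) \le ((r+\varepsilon)\log\lambda/c_1)^{q/2}\bigr\}$ and the analogous event for $\mathbf{X}^\lambda$, which is exactly your choice of $M_\lambda$. Your additional remark that $q<2$ is needed for $\exp\{C'(\log\lambda)^{q/2}\}\lambda^{-\varepsilon/2}\to 0$ is a detail the paper leaves implicit.
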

\begin{proof}
    The same proof for Theorem \ref{thm:strong-convergence} holds with the sets $F_2$ and $F_3$ replaced with 
    \begin{align*}
        F_2 &= \left\{N_{\alpha, [0,T]}(\mathbf{X}) \leq \left( \frac{(r+\varepsilon) \log \lambda}{c_1 } \right)^{q/2} \right\}, \\
        F_3 &= \left\{N_{\alpha, [0,T]}(\mathbf{X}^\lambda) \leq \left( \frac{(r+\varepsilon) \log \lambda}{c_1 } \right)^{q/2} \right\}.
    \end{align*}
\end{proof}

\section{Approximation schemes}
Having proved Theorem \ref{thm:strong-convergence}, it remains to provide approximations of  enhanced Gaussian processes  that converge strongly. The following lemma will prove useful in this pursuit.

\begin{lemma}\label{lem:markov}
    Let $(M, d)$ be a complete metric space, $X$ a $M$-valued random variable and $\{X^\lambda\}_{\lambda \geq 0 }$ a collection of $M$-valued random variables. If there exist constants $C=C(q)>0$ and $\eta >0$ such that $\Vert d(X,X^\lambda)\Vert_{L^q}\leq C \lambda^{-\eta}$ for all $q>q'$, then $X^\lambda \rightarrow X$ strongly with rate function $\delta(\lambda) = \lambda^{-\gamma}$, for $\gamma <\eta$.
\end{lemma}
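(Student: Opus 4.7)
The plan is to deduce the strong convergence rate from the $L^q$ bound by a direct application of Markov's inequality, with the free parameter $q$ chosen large enough to beat any prescribed polynomial rate $r$.

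First, I would fix $r>0$ and $\gamma<\eta$, and set $k=k(r,T)$ to be an arbitrary positive constant (the precise role of $k$ is only to match the definition of strong convergence with rate $\delta(\lambda)=\lambda^{-\gamma}$). The goal is to establish
\begin{equation*}
    \Pb\bigl(d(X,X^\lambda)\geq k\lambda^{-\gamma}\bigr) = o(\lambda^{-r}).
\end{equation*}

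Next, I would apply Markov's inequality to the nonnegative random variable $d(X,X^\lambda)^q$ for $q>q'$, which yields
\begin{equation*}
    \Pb\bigl(d(X,X^\lambda)\geq k\lambda^{-\gamma}\bigr)
    = \Pb\bigl(d(X,X^\lambda)^q \geq k^q\lambda^{-q\gamma}\bigr)
    \leq \frac{\E\bigl[d(X,X^\lambda)^q\bigr]}{k^q\lambda^{-q\gamma}}
    \leq \frac{C(q)^q}{k^q}\,\lambda^{-q(\eta-\gamma)},
\end{equation*}
using the hypothesis $\Vert d(X,X^\lambda)\Vert_{L^q}\leq C(q)\lambda^{-\eta}$ in the final inequality.

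Finally, since $\eta-\gamma>0$, I would choose $q$ large enough that $q(\eta-\gamma)>r$; this is possible because the bound holds for all $q>q'$. The resulting estimate is of the form $\mathrm{const}\cdot\lambda^{-q(\eta-\gamma)}$, which is $o(\lambda^{-r})$ as required. There is no serious obstacle here; the only subtlety is that the constant $C(q)$ may blow up as $q\to\infty$, but this is harmless because we need only a single fixed (sufficiently large) value of $q$ for each $r$.
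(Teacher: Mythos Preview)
Your proposal is correct and follows essentially the same approach as the paper: apply Markov's inequality with exponent $q$ to obtain a bound of order $\lambda^{-q(\eta-\gamma)}$, then choose $q$ large enough (the paper takes $q=(r+\varepsilon)/(\eta-\gamma)$) so that this is $o(\lambda^{-r})$. Your remark about $C(q)$ possibly growing with $q$ is a sensible aside not made explicit in the paper, but as you note it is irrelevant since only one fixed $q$ is needed per $r$.
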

\begin{proof}
    By Markov's inequality, we write 
    \begin{align*}
        \Pb \left( d(X, X^\lambda) \geq k \delta(\lambda)\right) &\leq \frac{\E[ d(X, X^\lambda)^q]}{k^q \delta(\lambda)^q} \\
        &\leq \frac{C^q \lambda^{-q \eta}}{k^q \lambda^{-q\gamma}}\\
        &= \left( \frac{C}{k}\right)^q \lambda^{q(\gamma - \eta)}.
    \end{align*}
    To show that the RHS is $o(\lambda^{-r})$, we simply set $q = \frac{r+\varepsilon}{\eta - \gamma}$ provided $q>q'$. 
\end{proof}
 The class of Gaussian rough path we work with are those introduced in \cite{2013friz:improved-lipschitz-estimate}:
\begin{condition}[Condition 10, \cite{2013friz:improved-lipschitz-estimate}]\label{con:gaussian}
    Let $X= (X^1, ..., X^d)$ be a centred, continuous Gaussian process with independent components. Assume that the covariance of every component has \emph{H\"older dominated} finite mixed $(1,\rho)$-variation for some $\rho \in [1,2)$ on $[0,T]^2$, that is, there exists $K< \infty$ such that, for $k=1,...,d$ and uniformly over $s<t$ in $[0,T]$,  
    \begin{equation*}
        \Vert X\Vert_{(1,\rho)-var; [s,t]}:= \sup_{(t_i), (t_j') \in \mathcal{D}([s,t])} \left( \sum_{t_j'} \left( \sum_{t_i} \left|\E \left[ X_{t_i, t_{i+1}}^k X_{t_j', t_{j+1}'}^k \right] \right| \right)^\rho \right)^{1/\rho} \leq K \left( |t-s|^{1/\rho} \right).
    \end{equation*}
\end{condition}
The notion of $\textit{mixed}$ variation is a more refined notion than $p$-variation. Specifically, we note that fractional Brownian motion satisfies condition \ref{con:gaussian} with $\rho = \frac{1}{2H}$. The class of approximations to these processes that we work with are those satisfying the following condition, as presented in \cite{2013friz:improved-lipschitz-estimate}.
\begin{condition}\label{con:approx}
    Let $X$ be a Gaussian path satisfying condition \ref{con:gaussian}. Let $\{X^h\}_{h \in (0,1]}$ be a collection of centred, Gaussian processes with independent components for every $h \in (0,1]$ such that 
    \begin{enumerate}
        \item $(X^h, X):[0,T]\rightarrow \R^{2d}$ is jointly Gaussian, $(X^{h;i}, X^i)$ and $(X^{h;j},X^j)$ are independent for $i \neq j$, and 
        \begin{equation*}
            \left\vert\left\vert (X^h, X) \right\vert\right\vert_{(1,\rho)-var;[0,T]} =: K < \infty, \qquad \rho \in [1,2)
        \end{equation*}
        as in Condition \ref{con:gaussian}.
        \item The second moments converge uniformly, so that 
        \begin{equation*}
            \sup_{t \in [0,T]} \E \left[ \big\vert X_t^h - X_t \big\vert^2 \right] =:\delta(h)^{1/\rho} \rightarrow 0 \, \text{ for }h \rightarrow 0.
        \end{equation*}
    \end{enumerate}
\end{condition}
Condition \ref{con:approx} implies the convergence of the rough path lift $\mathbf{X}^h \rightarrow \mathbf{X}$ in $p$-variation rough path metric for $p > 2\rho$. Further to this, one can show that taking $X^h$ to be the linear interpolation of $X$ with mesh-size at most $h$, that $\delta(h)= h$.

Approximating the rough path lift $\mathbf{B}^H$ of a fractional Brownian motion $B^H$ via linear interpolation, Friz and Riedel \cite{friz2014gaussian-rates} provide the estimate 
\begin{equation*}
    \left|\left| \rho_{p-var;[0,T]}\big(\mathbf{B}^{H; n}, \mathbf{B}^H\big) \right|\right|_{L^r}  \leq C n^{-\eta}
\end{equation*}
with $\eta < H$. Applying Lemma \ref{lem:markov}, we see that $\mathbf{B}^n \rightarrow \mathbf{B}$ strongly with rate $\delta(n) = n^{-\gamma}$, for all $\gamma < H$ and $H\in \big( \frac{1}{4}, \frac{1}{2} \big]$.

%
%

\section*{Acknowledgments}
      GTN and OP gratefully acknowledge the support of the Australian Research Council DP180103106 grant. Additionally, OP acknowledges financial support from the Swiss National Science Foundation Project 200021-191984.

\bibliographystyle{alpha} 
\bibliography{ref}       

\begin{thebibliography}{BDTY20}

\bibitem[BDTY20]{bensoussan2020}
Alain Bensoussan, Boualem Djehiche, Hamidou Tembine, and Sheung Chi~Phillip
  Yam.
\newblock {Mean-Field-Type Games with Jump and Regime Switching}.
\newblock {\em Dynamic Games and Applications}, 10(1):19--57, 2020.

\bibitem[CFRS16]{2013friz:improved-lipschitz-estimate}
B.~Christian, P.~Friz, S.~Riedel, and J.~Schoenmakers.
\newblock From rough path estimates to multilevel monte carlo.
\newblock {\em SIAM Journal on Numerical Analysis}, 54(3):1449--1483, 2016.

\bibitem[CLL13]{2013cass:gaussian-tail-rde}
T.~Cass, C.~Litterer, and T.~Lyons.
\newblock {Integrability and tail estimates for Gaussian rough differential
  equations}.
\newblock {\em The Annals of Probability}, 41(4):3026 -- 3050, 2013.

\bibitem[DPR02]{duan2002option-pricing}
Jin-Chuan Duan, Ivilina Popova, and Peter Ritchken.
\newblock Option pricing under regime switching.
\newblock {\em Quantitative Finance}, 2(2):116, apr 2002.

\bibitem[FR13]{friz2012integrability}
P.~Friz and S.~Riedel.
\newblock Integrability of (non-)linear rough differential equations and
  integrals.
\newblock {\em Stochastic Analysis and Applications}, 31:336--358, 2013.

\bibitem[FR14]{friz2014gaussian-rates}
P.~Friz and S.~Riedel.
\newblock Convergence rates for gaussian rough paths.
\newblock 2014.

\bibitem[FV10]{friz2010introduction}
Peter~K. Friz and Nicolas~B. Victoir.
\newblock {\em Multidimensional Stochastic Processes as Rough Paths: Theory and
  Applications}.
\newblock Cambridge Studies in Advanced Mathematics. Cambridge University
  Press, 2010.

\bibitem[HF14]{friz2014rough-path-book}
Martin Hairer and Peter Friz.
\newblock {\em Rough path theory: with an introduciton to regularity
  structures}.
\newblock Springer, 2014.

\bibitem[IW92]{ikeda1992sde-diffusion}
N.~Ikeda and S.~Watanabe.
\newblock {\em Stochastic differential equations and diffusion processes}.
\newblock 1992.

\bibitem[Lyo98]{lyons1998rough_paths}
T.~Lyons.
\newblock {Differential equations driven by rough signals}.
\newblock {\em {Rev. Mat. Iberoam.}}, 14(2):215 -- 310, 1998.

\bibitem[NP21]{nguyen2021rate}
G.T. Nguyen and O.~Peralta.
\newblock Rate of strong convergence to solutions of regime-switching
  stochastic differential equations.
\newblock 2021.

\bibitem[RW85]{romisch1985}
W.~R\"omisch and A.~Wakolbinger.
\newblock {On Lipschitz dependence in systems with differentiated inputs}.
\newblock {\em Mathematische Annalen}, 272:237--248, 1985.

\bibitem[You36]{young1936holder}
L.~Young.
\newblock An inequality of h\"older type, connected with stieltjes integration.
\newblock {\em Acta Mathematica}, 67(none):251 -- 282, 1936.

\bibitem[YZ10]{yin2010hybrid_switching}
G.~Yin and C.~Zhu.
\newblock {\em Hybrid switching diffusions}.
\newblock Springer New York, NY, 1 edition, 2010.

\bibitem[ZC13]{zou2013}
Wei Zou and Jiahua Chen.
\newblock A markov regime-switching model for crude-oil markets: Comparison of
  composite likelihood and full likelihood.
\newblock {\em The Canadian Journal of Statistics / La Revue Canadienne de
  Statistique}, 41(2):353--367, 2013.

\end{thebibliography}

\end{document}